\numberwithin{equation}{section}
\theoremstyle{definition}
\newtheorem{thm}{Theorem}[section]
\newtheorem{theorem}[thm]{Theorem}
\newtheorem{lemma}[thm]{Lemma}
\newtheorem{corollary}[thm]{Corollary}
\newtheorem{proposition}[thm]{Proposition}
\newtheorem{remark}[thm]{Remark}
\newtheorem{definition}[thm]{Definition}
\newtheorem{claim}[thm]{Claim}
\newtheorem{assumption}[thm]{Assumption}
\newtheorem{example}[thm]{Example}
\newtheorem{defn-thm}[thm]{Definition-Theorem}
\newtheorem{propdef}{Proposition-Definition}
\newenvironment{observe}{\noindent\textcolor{blue}{\textit{Observation}}.}{\hfill \textcolor{blue}{$\blacktriangleleft$}\par}
\newcommand{\sA}{{\mathcal A}}
\newcommand{\sE}{{\mathcal E}}
\newcommand{\sF}{{\mathcal F}}
\newcommand{\sG}{{\mathcal G}}
\newcommand{\sH}{{\mathcal H}}
\newcommand{\sK}{{\mathcal K}}
\newcommand{\sL}{{\mathcal L}}
\newcommand{\sN}{{\mathcal N}}
\newcommand{\sO}{{\mathcal O}}
\newcommand{\sP}{{\mathcal P}}
\newcommand{\sQ}{{\mathcal Q}}
\newcommand{\sS}{{\mathcal S}}
\newcommand{\sU}{{\mathcal U}}
\newcommand{\g}{{\mathfrak g}}
\newcommand{\gT}{{\mathfrak T}}
\newcommand{\ssA}{{\mathscr A}}
\newcommand{\ssE}{{\mathscr E}}
\newcommand{\ssP}{{\mathscr P}}
\newcommand{\A}{\mathbb{A}}
\newcommand{\N}{{\mathbb N}}
\newcommand{\R}{{\mathbb R}}
\newcommand{\Z}{{\mathbb Z}}
\newcommand{\GL}{\operatorname{GL}}
\newcommand{\SL}{\operatorname{SL}}
\newcommand{\Ext}{\operatorname{Ext}}
\newcommand{\Hom}{\operatorname{Hom}}
\newcommand{\Ker}{\operatorname{Ker}}
\newcommand{\Pic}{\operatorname{Pic}}
\newcommand{\Spec}{\operatorname{Spec}}
\newcommand{\Sch}{\operatorname{\mathbf{Sch}}}
\newcommand{\tr}{{\operatorname{tr}}}
\newcommand{\End}{{\operatorname{End}}}
\newcommand{\Higgs}{\operatorname{Higgs}}
\newcommand{\Res}{\operatorname{Res}}
\newcommand{\id}{{\operatorname{id}}}
\newcommand{\btheorem}{\begin{theorem}}
	\newcommand{\etheorem}{\end{theorem}}
\newcommand{\bproposition}{\begin{proposition}}
	\newcommand{\eproposition}{\end{proposition}}
\newcommand{\bdefinition}{\begin{definition}}
	\newcommand{\edefinition}{\end{definition}}
\newcommand{\bcorollary}{\begin{corollary}}
	\newcommand{\ecorollary}{\end{corollary}}
\newcommand{\bproof}{\begin{proof}}
	\newcommand{\eproof}{\end{proof}}
\newcommand{\bremark}{\begin{remark}}
	\newcommand{\eremark}{\end{remark}}
\newcommand{\eexample}{\end{example}}
\newcommand{\bexample}{\begin{example}}
\newcommand{\elemma}{\end{lemma}}
\newcommand{\blemma}{\begin{lemma}}
\newcommand{\bobserve}{\begin{observe}}
	\newcommand{\eobserve}{\end{observe}}
\renewcommand{\bar}{\overline}
\renewcommand{\phi}{\varphi}
\newcommand{\ee}{\end{eqnarray*}}
\newcommand{\be}{\begin{eqnarray*}}
\newcommand{\beq}{\begin{equation}}
	\newcommand{\eeq}{\end{equation}}
\newcommand{\bd}{\begin{enumerate}}
	\newcommand{\ed}{\end{enumerate}}
\newcommand{\bti}{\begin{tikzcd}}
	\newcommand{\eti}{\end{tikzcd}}
\renewcommand{\hat}{\widehat}
\renewcommand{\tilde}{\widetilde}
\newcommand{\pdeg}{\text{par-}deg}
\newcommand{\pmu}{\text{par-}\mu}
\def\pt{{\scriptscriptstyle\bullet}}
\newcommand{\PGL}{\operatorname{PGL}}
\newcommand{\M}{{\textbf{M}}}
\begin{document}

\title{Parabolic Hitchin Maps and Their Generic Fibers
}




	\author{Xiaoyu Su$^1$} 
\address{$^1$ Yau Mathematical Sciences Center, Beijing, 100084, China.}
\email{suxiaoyu@mail.tsinghua.edu.cn}

\author{Xueqing Wen$^2$ }
\address{$^2$ Yau Mathematical Sciences Center, Beijing, 100084, China.}
\email{xueqingwen@mail.tsinghua.edu.cn}

\author{Bin Wang$^3$ }
\address{$^3$ Steklov Mathematical Institute of Russian Academy of Science, Moscow, 119991, Russia.
}
\email{binwang@mi-ras.ru}



\maketitle

\begin{abstract}
	
	We set up a BNR correspondence for  moduli spaces of Higgs bundles over a curve with a parabolic structure over any algebraically closed field. This leads to a concrete description of generic fibers of the associated strongly parabolic Hitchin map. We also show that the global nilpotent cone is equi-dimensional with half dimension of the total space. As a result, we prove the flatness and surjectivity of this map and the existence of very stable parabolic vector bundles.
	\vspace{10pt}
	
	\noindent Keywords. {strongly parabolic Higgs bundles \and Newton polygons \and parabolic BNR correspondence \and Hitchin maps \and global nilpotent cones}
\end{abstract}

%

\section{Introduction}

Hitchin  \cite{Hit87S} introduced the map now named after him,  and  showed that it defines  a completely integrable system in the complex-algebraic  sense.  Subsequently in  \cite{BNR}, Beauville, Narasimhan and Ramanan constructed a correspondence --- indeed nowadays referred to as the BNR correspondence --- which among other things characterizes  a generic fiber of the Hitchin map as a compactified Jacobian. 

Our paper is concerned with a parabolic version of the BNR correspondence. Roughly speaking, we show that a generic fiber of the strongly parabolic Hitchin map can be identified with the Picard variety (of certain degree) of the normalization of the corresponding spectral curve. Since we consider the parabolic structures which need not be full flags, generic spectral curves are singular in general. We analyze singularities of generic spectral curves via the toric resolution, and thus, among other things, the genericity condition is closely related with Newton polygons of local equations determined by points in the Hitchin base space which forms an open subvariety. See Assumption \ref{generic on boud} and Definition \ref{def:genericity condition} for more details. Though Baraglia, Kamgarpour and Varma \cite{BKV18} proved that generic fibers are Abelian varieties for parahoric Hitchin systems with the structure group simply-connected and simple over $\mathbb{C}$, our results yield a modular interpretation of generic fibers in the case of $\GL_{r}$ which can be applied to study the Langlands duality between parabolic Hitchin systems.

Besides, we show that the parabolic global nilpotent cone is equi-dimensional and has half dimension of the total space. The strategy is to study deformations of nilpotent Higgs bundles within the global nilpotent cone,  which among other things, can be used to prove the existence of very stable parabolic vector bundles, i.e., parabolic bundles that do not admit nonzero nilpotent Higgs fields. All these results are in the setting of algebraic geometry. By this we mean that we work over an arbitrary algebraically closed field $k$. 

To be concrete,  let us fix a smooth projective curve $X$ over $k$ of genus $g(X)\geq 2$ and  a finite subset $D\subset X$, which we shall also regard as a reduced effective divisor  on $X$. We also fix a positive integer $r$ which will be the rank of  vector bundles on $X$ that we shall consider (but if  $k$ has  characteristic $2$, we shall assume $r\geq 3$ in order to avoid issues involving ampleness, please see the Appendix) and we specify for each
$x\in D$ a finite sequence $m^\pt (x)=(m^1(x), m^2(x), \dots, m^{\sigma_x}(x))$ of positive integers summing up to $r$. We refer to these data as a \emph{quasi-parabolic structure}; let us denote this simply by $P$ and we call $m^\pt (x)$ the \emph{Levi type} of $P$.

A \emph{quasi-parabolic  vector bundle of type $P$} is then a rank $r$ vector bundle $E$ on $X$ which for every  $x\in D$ is endowed with a filtration $E|_{x}=F^0(x)\supset F^1(x)\supset \cdots \supset F^{\sigma_x}(x)=0$ such that $\dim F^{j-1}(x)/ F^{j}(x)=m^j(x)$. 
A  \emph{strongly parabolic Higgs field} on such a bundle is an $\sO_X$-homomorphism  $\theta : E\to E\otimes_{\sO_X} \omega_X(D)$ with the property that it takes each $F^j(x)$ to $F^{j+1}(x)\otimes_{\sO_X}\omega_{X}(D)|_{x}$. We call it a \emph{weakly parabolic Higgs field}, if it only takes 
$F^j(x)$ to $F^{j}(x)\otimes_{\sO_X}\omega_{X}(D)|_{x}.$ A weakly parabolic Higgs field $\theta$ has a characteristic polynomial 
with coefficients as an element of $\mathcal{H}:=\prod_{j=1}^rH^0 (X, (\omega(D))^{\otimes j})$, and in the total space of $\omega_{X}(D)$, the characteristic polynomial  itself defines
the \emph{spectral curve} that is finite over $X$.

With the help of the Geometric Invariant Theory, one can construct coarse moduli spaces of such objects, but this requires the
"polarization data", which in the present context take the form of a \emph{weight function $\alpha$} which assigns to every $x\in D$ a set of real numbers 
$0\leq \alpha_1(x)<\cdots <\alpha_{\sigma_x}(x)<1$. As we will recall later, this gives rise to notions of parabolic structures and corresponding stability conditions. 
And these lead to the existence of coarse moduli spaces as quasi-projective varieties parametrizing the isomorphism classes of $\alpha$-stable objects of type $P$: for parabolic vector bundles we get $\M_{P,\alpha}$, for  weakly  parabolic Higgs bundles we get $\Higgs^W_{P,\alpha}$ and for strongly parabolic Higgs bundles  we get $\Higgs_{P,\alpha}$, the latter being contained in $\Higgs^W_{P,\alpha}$ as a  closed subvariety.
If we choose generic $\alpha$, the notions of semistability and stability coincide, and the corresponding coarse moduli spaces are nonsingular. 

By assigning to a Higgs field the  coefficients of its  characteristic polynomial, we obtain the weak parabolic Hitchin map $h^W_{P,\alpha}: \Higgs^W_{P,\alpha}\to  \mathcal{H}$. 
We prove that $h^W_{P,\alpha}$ is flat by showing that each connected component of the generic fiber of $h^{W}_{P,\alpha}$ is a torsor  of the  Jacobian variety of the corresponding spectral curve and compute the number of connected components. The number of connected components of fibers is obtained by Logares and Martens \cite{LM10} in a more general setting over $\mathbb{C}$.

Let  $\mathcal{H}_{P}$ be the  image of $\Higgs_{P,\alpha}$. We denote the resulting morphism as
$$h_{P,\alpha}: \Higgs_{P,\alpha}\to \mathcal{H}_{P},$$ and refer it as the strongly parabolic Hitchin map. We characterize $\mathcal{H}_{P}$ as an affine subspace of $\sH$ (this was obtained earlier by Baraglia and Kamgarpour \cite{BK18}) and prove essentially  that
$h_{P,\alpha}$ has all the properties that one would hope for, i.e., $h_{P,\alpha}$ is flat projective and the generic fibers of $h_{P,\alpha}$ are identified as Picard varieties of normalized spectral curves.

We have a commutative diagram 
\[\begin{tikzcd}
	\Higgs_{P,\alpha}\arrow[d,hook]\arrow[r,"h_{P,\alpha}"]& \mathcal{H}_{P}\arrow[d,hook]\\
	\Higgs^W_{P,\alpha}\arrow[r,"h^W_{P,\alpha}"]& \mathcal{H}
\end{tikzcd},
\]
but beware that this is not Cartesian unless all the $m^j(x)$ are equal to $1$, i.e., a full flag.
We give a concrete description of generic fibers of $h_{P,\alpha}$ by obtaining the parabolic BNR correspondence in this setting,
which, roughly speaking, amounts to (see Theorem \ref{parabolic BNR}):

\begin{theorem}[Parabolic BNR Correspondence]\label{main02}
	
	There is a one to one correspondence between: 
	$$\left\{ \begin{array}{c}\text{isomorphism classes of strongly parabolic Higgs bundles of a fixed degree}\\ \text{with a prescribed generic characteristic polynomial}    	
	\end{array} \right\} $$   	
	and      	 
	\[ \left\{\begin{array}{c}\text{line bundles over the corresponding normalized spectral curve}\\ \text{with a fixed degree determined by the parabolic data}\end{array} \right\}.\]
	For the generic condition, see Definition \ref{def:genericity condition}. In particular, generic fibers of $h_{P,\alpha}$ are connected.   	
\end{theorem}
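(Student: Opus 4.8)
The plan is to reduce the statement to the classical BNR dictionary applied to the \emph{normalized} spectral curve, and then to show that the parabolic flag data is exactly what upgrades a torsion-free sheaf on the (singular) spectral curve to a genuine line bundle on its normalization. Fix a generic $a=(a_1,\dots,a_r)\in\mathcal{H}_{P}$ and let $\tilde X\subset\Tot(\omega_X(D))$ be the associated spectral curve, with finite degree-$r$ projection $\pi:\tilde X\to X$ and normalization $\nu:\tilde X^{\nu}\to\tilde X$; write $\bar\pi=\pi\circ\nu$. Since $a\in\mathcal{H}_{P}$, the characteristic polynomial forces the fiber of $\tilde X$ over each $x\in D$ to be supported at the origin of the cotangent space, reflecting the nilpotency of the residue of a strictly parabolic $\theta$; hence for generic $a$ the curve $\tilde X$ is integral and smooth away from $\bar\pi^{-1}(D)$, and $\nu$ resolves precisely the singularities sitting over $D$. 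For a line bundle $L$ on $\tilde X^{\nu}$ I set $E:=\bar\pi_* L$ and let $\theta$ be induced by multiplication by the tautological section of $\bar\pi^*\omega_X(D)$, as in the non-parabolic case.

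The key new step is to manufacture, from $L$ alone, the filtration $E|_x=F^0(x)\supset\cdots\supset F^{\sigma_x}(x)=0$ with $\dim F^{j-1}(x)/F^{j}(x)=m^j(x)$, and to check that the resulting $\theta$ is \emph{strictly} parabolic. I would work in the complete local ring of $\tilde X^{\nu}$ over each $x\in D$: the tautological coordinate $\lambda$ (the fiber coordinate of $\omega_X(D)$) vanishes along every branch because the residue is nilpotent, and filtering the pushforward sections by their order of vanishing in $\lambda$ defines the $F^{j}(x)$. Strictness, $\theta(F^{j}(x))\subset F^{j+1}(x)\otimes T^*_x(X)$, is then immediate since $\theta$ acts as multiplication by $\lambda$. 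The dimensions $m^j(x)$ are read off from the branch and ramification structure of $\tilde X^{\nu}$ over $x$, which for the stratum $\mathcal{H}_{P}$ is exactly the partition recorded by $P$; a Riemann--Roch computation on $\bar\pi_* L$, using the conductor of $\nu$ and $\bar\pi_*\sO_{\tilde X^{\nu}}$, then forces $\deg L$ to equal a fixed integer $d$ determined by $\deg E$, the weights $\alpha$, and the $m^j(x)$.

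Conversely, any line bundle $L$ of degree $d$ produces through the above recipe a parabolic Higgs bundle $(\bar\pi_*L,\theta)$ of type $P$ with characteristic polynomial $a$; its $\alpha$-stability for generic $\alpha$ follows from integrality of $\tilde X$, exactly as in the non-parabolic BNR argument, since a destabilizing parabolic subsheaf would force a proper $\theta$-invariant subscheme of the integral curve $\tilde X$. The two assignments are mutually inverse: the projection formula recovers $L$ from the branchwise $\lambda$-filtration of $\bar\pi_*L$, while reconstructing the spectral data of $\theta$ recovers $\tilde X^{\nu}$. This yields the claimed bijection between parabolic Higgs bundles with characteristic polynomial $a$ and the set $\Pic^{d}(\tilde X^{\nu})$ of degree-$d$ line bundles on the normalized spectral curve.

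Finally, for the connectedness statement I note that $\Pic^{d}(\tilde X^{\nu})$ is a torsor under the Jacobian $\Jac(\tilde X^{\nu})=\Pic^{0}(\tilde X^{\nu})$, which is connected because $\tilde X^{\nu}$ is a smooth integral projective curve; hence the generic fiber of $h_{P,\alpha}$ is connected. I expect the main obstacle to be the second step: pinning down the precise local geometry of $\tilde X^{\nu}$ over the points of $D$, showing that the $\lambda$-order filtration has associated graded pieces of the prescribed dimensions $m^j(x)$, and carrying out the exact degree bookkeeping that ties $\deg L$ to the parabolic data. Everything else is a parabolic enhancement of the classical BNR correspondence, but this local matching of flags with branches is where the genuinely new work lies.
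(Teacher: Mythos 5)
Your construction runs in only one direction. Starting from a line bundle $L$ on the normalization you build $(\bar{\pi}_*L,\theta)$ together with a flag, and you then ``recover $L$ from the branchwise $\lambda$-filtration of $\bar{\pi}_*L$'' --- but that only inverts the assignment on objects already known to be pushforwards from $\tilde{X}^{\nu}$. The genuinely hard half of the theorem, and the place where strong parabolicity is actually used, is the opposite direction: an arbitrary $(\sE,\theta)$ with $\mathrm{char}_\theta=a$ corresponds, by classical BNR, to a rank-one torsion-free sheaf $V$ on the \emph{singular} curve $X_a$, and one must show that the flag at $x$ together with the condition $\theta(F^j(x))\subset F^{j+1}(x)\otimes T^*_x(X)$ forces $V$ to be locally free over the normalization $\prod_i k[[t]][\lambda]/(f_i)$ rather than merely over $k[[t]][\lambda]/(f)$. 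This is exactly the content of the paper's local analysis (Propositions \ref{first step direct summand} and \ref{decomposition we want}: the decomposition $V\cong\bigoplus_i\Ker f_i(\theta)$, proved by induction using that the Eisenstein factors have constant terms differing modulo $t^2$). Without this step you only obtain a correspondence with some compactified Jacobian of $X_a$ decorated with flag data, and the connectedness of the generic fiber --- the whole point of passing to the normalization --- does not follow. Your proposal contains no argument here.

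A second, smaller problem is your choice of local flag. Over $x$ the branches of $\tilde{X}^{\nu}$ have ramification degrees given by the \emph{conjugate} partition $(\mu_1,\dots,\mu_{n_1})$ of the decreasing rearrangement of $(m^j(x))$, not by ``exactly the partition recorded by $P$''; and since $\lambda$ is a uniformizer on each branch, filtering $\bar{\pi}_*L$ by order of vanishing in $\lambda$ yields graded pieces of dimensions $\#\{i:\mu_i\geq j\}=n_j$, i.e.\ the decreasing rearrangement of the $m^j(x)$. When the prescribed Levi type is not already decreasing, this is a flag of the wrong type. The paper instead establishes, by a Young-tableaux argument, that there is a \emph{unique} flag $L=L_0\supset\cdots\supset L_{\sigma}=L\otimes\tilde{\pi}^*\sO_X(-x)$ whose graded pieces have the dimensions $m^1(x),\dots,m^{\sigma_x}(x)$ in the prescribed order. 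The remaining ingredients of your outline (genericity and integrality of the spectral curve, the Riemann--Roch degree count, stability from integrality, connectedness from $\Pic^0$ of a smooth integral curve) do match the paper.
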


As we mentioned at the beginning of the introduction, generic spectral curves are singular in general unless all quasi-parabolic structures come from full flags. To build up such a parabolic BNR correspondence, we first study the local structure of singularities of generic spectral curves, which has an interesting relation with parabolic structures. The ramification of normalized spectral curves in some sense corresponds to conjugate partitions, see Corollary \ref{ramification}. Then we show that parabolic structures make it possible to lift the coherent module structure of a Higgs bundle over the spectral curve to a coherent module structure over the normalization. The main technical input is concluded in Subsection \ref{local analysis}.   

Furthermore, we construct the complex that bounds the deformation inside an open dense subset of the global nilpotent cone and prove that (see also Theorem \ref{main 1} and Proposition \ref{dimestimate1}):
\begin{theorem}\label{dimestimate01}
	The parabolic global nilpotent cone $\sN il_{P}:=h_{P,\alpha}^{-1}(0)$ is equi-dimensional and $\dim\sN il_{P}=1/2\dim\Higgs_{P,\alpha}$.
\end{theorem}

Then by the miraculous flatness criterion for maps between regular varieties,  for generic parabolic weight $\alpha$, the strongly parabolic Hitchin map $h_{P,\alpha}$ is flat (see Theorem \ref{flat}). 

Besides, by Theorem \ref{dimestimate01} and using systems of Hodge bundles, we prove the existence of very stable parabolic bundles (see Theorem \ref{very stable}).
\begin{theorem}
	The set of very stable parabolic bundles contains a non-empty Zariski open set in the moduli of stable parabolic bundles $\M_P$.
\end{theorem}
Let us now indicate how this relates to previous works.  
After the fundamental work of Hitchin and  Beauville-Narasimhan-Ramanan mentioned above,  several papers investigated various properties of the  Hitchin  map over the complex field, for example in \cite{Lau88}, \cite{Fal93}, \cite{Gin01}, \cite{GO13}. Nitsure \cite{N91} constructed the moduli space of (semi-)stable $L$-twisted Higgs bundles over an algebraically closed field and showed the properness of Hitchin maps. 
In the parabolic setting, Yokogawa \cite{Yo93C,Yo95} constructed the moduli space of (semi-)stable strongly/weakly parabolic Higgs bundles and then proved that the weakly parabolic Hitchin map is proper. His construction works over any algebraically closed field. We should also point out that the classical Beauville-Narasimhan-Ramanan correspondence also holds in positive characteristics, see \cite[Corollary 4.5]{BB07}.

Bottacin and Markman in \cite{Botta95,Markman94} studied the symplectic geometry of the moduli space of $L$-twisted Higgs pairs. They defined a Poisson structure on the moduli space and showed that it forms an algebraically completely integrable system (in a generalized sense) under the $L$-twisted Hitchin map and fibered by Jacobians of spectral curves. Bottacin \cite{Botta95} also considered the symplectic geometry of the cotangent bundle of the moduli space of parabolic vector bundles and gave an explicit description of the canonical symplectic structure via deformation theory which is an analogous result of Biswas and Ramanan's in \cite{BR94}. We note here that an $\omega_{X}(D)$-twisted Higgs bundle can be viewed as a weakly parabolic Higgs bundle with trivial parabolic structure. On the other hand, the ordinary Higgs bundles can be viewed as the strongly parabolic Higgs bundles with trivial parabolic structure. Notice that the convention of ``generic weights" cannot be made for the trivial parabolic situation\footnote{We thank the referee for pointing out this.} as long as one can restrict to the smooth part to study the symplectic geometry.

We should also mention the deep results of Ngo \cite[Section 4]{Ngo10} on generic fibers of $L$-twisted $G$-Hitchin systems over a finite field. To be more precise, in \cite[Section 4.15]{Ngo10}, over an open dense subvariety (denoted as $\ssA^{\text{ani}}$ there) of the Hitchin base, Ngo constructed a homomorphism between a product of affine Springer fibers and Picard stacks of torsors of a smooth commutative group scheme on the base curve with the corresponding Hitchin fiber. As a result, Ngo could count points of the Hitchin fiber over finite fields for $L$-twisted $G$-Higgs bundles. In the $G=\GL_{n}$ case, the Hitchin system there can be treated as $L$-twisted Hitchin system with trivial weak parabolic structure as we mentioned above. Moreover, over a particular open subvariety (denoted as $\ssA^{\diamond}$ there), the Higgs bundles in each fiber are regular Higgs bundles, and the Hitchin fibers are isomorphic to Picard stacks of torsors of a smooth commutative group scheme. In the strongly parabolic case, we prove a similar geometric result. That is to say, over an open subvariety of the Hitchin base, each fiber is a Picard variety of normalized spectral curves. In fact, due to the existence of strongly parabolic structures at marked points, the Higgs field can never be regular at the marked points (unless the strongly parabolic structures come from Borel subalgebras). Our results show that at marked points, for generic strongly parabolic Higgs bundles, their Higgs fields are Richardson, which are as "regular" as possible in parabolic subalgebras, please see Subsection \ref{subsec:regularity} for more details.

Logares and Martens \cite{LM10}  studied the generic fibers of weakly parabolic Hitchin maps over the complex field (which, however, they referred to as parabolic Hitchin maps in \cite{LM10}). They constructed a Poisson structure on $\Higgs^W_{P,\alpha}$ and proved that $h^W_{P,\alpha}$ is an integrable system in the Poisson sense. In Section \ref{section5}, we determine the component group of the generic fibers of the weakly parabolic Hitchin map $h^W_{P,\alpha}$ in the spirit of BNR correspondence instead of using the Poisson structure. 

Scheinost and Schottenloher \cite{SS95}, also working over $\mathbb{C}$, defined the strongly parabolic Hitchin map $h_{P,\alpha}$ (which they referred to as the Hitchin map for parabolic Higgs bundles see \cite[Definition 5.5]{SS95} and their parabolic Higgs bundles are strongly parabolic Higgs bundles in this paper) and proved that $h_{P,\alpha}$ is an algebraically completely integrable system. Baraglia, Kamgarpour and Varma \cite{V16,BK18, BKV18} generalized this to a $G$-parahoric Hitchin system, where $G$ can be a simple, simply connected algebraic group over $\mathbb{C}$. Our method here works over an algebrically closed field, and yields a more precise description of generic fibers in the case $G=\GL_{r}$ which is a modular interpretation, i.e., a generic fiber is a Picard variety. 
\\

We close this section by describing  how this paper is organized. In Section \ref{sec 2}, we recall the parabolic setting and  review the properties of the moduli spaces $\M_{P,\alpha}$, $\Higgs_{P,\alpha}$ and $\Higgs^W_{P,\alpha}$.  In Section \ref{sec 3}, we recall the construction of  the (weakly/strongly) parabolic Hitchin maps $h^W_{P,\alpha}$ and $h_{P,\alpha}$ and determine the corresponding parabolic Hitchin base space $\sH_P$ as in \cite{BK18}.  In Section \ref{sec bnr}, we set up the parabolic BNR correspondence (Theorem \ref{main02}) and determine the generic fibers of a strongly parabolic Hitchin map. In Section \ref{section5}, we do the same for a weakly parabolic Hitchin map. And finally, in Section \ref{sec 6}, we compute the dimension of parabolic nilpotent cones and prove Theorem \ref{flat}. We also prove the existence of very stable parabolic vector bundles. As an application, we use a co-dimension estimate to give an embedding of conformal blocks into theta functions.

\begin{remark}
	Unfortunately conventions regarding parabolic Higgs bundles vary in the literature.  For example, a weakly parabolic Higgs bundle here is referred to as a ``parabolic Higgs bundle" in \cite{LM10}, a strongly parabolic Higgs bundle is referred to as a ``parabolic Higgs bundle" in \cite{BR94} and  \cite{SS95}. In order not to cause confusions,  we use ``strongly/weakly parabolic" throughout the paper.
\end{remark}

\section{Strongly and weakly parabolic Higgs bundles and parabolic Hitchin maps}\label{sec 2}

\subsection{Parabolic vector bundles}   
We use the notions introduced above. In particular, we fix $X$ and a set of quasi-parabolic data $P=(D, \{m^\pt(x)\}_{x\in D})$.
We denote by $P_x\subseteq \GL_r=G$ the standard parabolic subgroup corresponding to $\{m^j(x)\}$. We also fix a weight function $\alpha=\{\alpha_{\pt}(x)\}_{x\in D}$ and call $(P, \alpha)$ a parabolic structure.
We fix a positive integer $r$ and let $E$ be a rank $r$ vector bundle over $X$ endowed with a quasi-parabolic structure of type $P$.

\begin{remark}\label{rem:convention}
	From now on, we will use calligraphic letters $\sE, \sF, \ldots$ to denote parabolic bundles of a given type (with certain quasi-parabolic structure), and use the normal upright Roman letters $E, F,\ldots$ to denote the underlying vector bundles. We will also consider a local version (where $X$ is replaced by the spectrum of a complete DVR), then $D$ will be the closed point, and we will write $\sigma$, $\{m^{j}\}_{j=1}^{\sigma}$ and  $\{\alpha_{j}\}_{j=1}^{\sigma}$ instead.
\end{remark}

Let a parabolic vector bundle $\sE$  on $X$ be given. Then every $\sO_X$-coherent submodule $F$ of $E$ inherits from $E$ a quasi-parabolic structure  so that it may be regarded as  a parabolic sheaf $\sF$. Note that the weight function $\alpha$ for $\sE$ determines one for $\sF$. Similarly, for any line bundle $L$ on $X$, we have a natural parabolic structure on $E\otimes_{\sO_X}L$, which we then denote by $\sE\otimes_{\sO_X}L$. For more details, please refer to \cite{Yo93C}.

An endomorphism of the parabolic bundle $\sE$ is a vector bundle endomorphism of $E$ which preserves the filtrations $F^\pt(x)$. We call this a \emph{strongly parabolic endomorphism} if it takes $F^i(x)$ to $F^{i+1}(x)$ for all $x\in D$ and $i$. We denote the subspaces of $\End_{\sO_X}(E)$ defined by these properties as
$$
ParEnd(\sE) \text{  resp.\  } SParEnd(\sE).
$$ 
Similarly we can define the sheaf of parabolic endomorphisms and sheaf of strongly parabolic endomorphisms, denoted by $\sP ar\sE nd(\sE)$ and  $\sS\sP ar\sE nd(\sE)$ respectively.

\begin{remark}
	Following \cite{Yo95}, we have 
	\beq\label{(3.4)}  \sP ar \sE nd(\sE)^{\vee}\cong \sS \sP ar\sE nd(\sE)\otimes_{\sO_X}\sO_X(D).
	\eeq
\end{remark}

We now define the \emph{parabolic degree (or $\alpha$-degree)} of $\sE$ to be 
\[\pdeg(\sE):=\deg(E)+\sum_{x\in D}\sum_{j=1}^{\sigma_x}\alpha_{j}(x)m^j(x).\]
And the \emph{parabolic slope or $\alpha$-slope} of $\sE$ is given by \[\pmu(\sE)=\frac{\pdeg(\sE)}{r}.\]

\begin{definition}
	A parabolic vector bundle $\sE$ is said to be stable(resp. semistable),  if for every proper coherent $\sO_X$-submodule $F\subsetneq E$ , we have 
	\[
	\pmu(\sF)<\pmu(\sE)\ (\text{resp.}\leq),
	\] 
	where the parabolic structure on $\sF$ is  inherited from $\sE$. 
\end{definition}

There exists a coarse moduli space for semistable parabolic vector bundles of rank $r$ with fixed quasi-parabolic type $P$ and weights $\alpha$. For the constructions and properties, we refer the readers to \cite{MS80,Yo93C,Yo95}. We denote the moduli space by $\M_{P,\alpha}$ (the stable locus is denoted by $\M_{P,\alpha}^s$). $\M_{P,\alpha}$ is a normal projective variety of dimension (see \cite[Theorem 4.1]{MS80})
\begin{align*}
	\dim(\M_{P,\alpha})&=(g-1)r^2+1+\sum\limits_{x\in D}\frac{1}{2}(r^2-\sum_{j=1}^{\sigma_x}(m^{j}(x))^2)\\
	&=(g-1)r^2+1+\sum\limits_{x\in D}\dim(G/P_{x}).
\end{align*}

\subsection{Strongly and weakly parabolic Higgs bundles}

Let us define strongly/weakly parabolic Higgs bundles. It is reasonable that a general strongly parabolic Higgs bundle should be a cotangent vector of a stable parabolic vector bundle in its moduli space. Recall from (\ref{(3.4)}) that $\sP ar\sE nd(\sE)$ is naturally dual to $\sS\sP ar\sE nd(\sE)(D)$. 
Yokogawa \cite{Yo95} showed:
\[ T^\vee_{\left[\sE\right]}\M_{P,\alpha}^{s}
=(H^1(X,\sP ar\sE nd(\sE)))^\vee
\cong H^0(X,\sS \sP ar \sE nd(\sE)\otimes_{\sO_X}\omega_X(D)).
\]   
So we define strongly parabolic Higgs bundles as follows: 
\bdefinition A \emph{strongly parabolic Higgs bundle} on $X$ with fixed parabolic data $(P,\alpha)$ is a parabolic vector bundle $\sE$ together with a Higgs field $\theta$, 
\[
\theta:\sE\to \sE\otimes_{\sO_X}\omega_X(D)
\] such that $\theta$ is a strongly parabolic map between $\sE$ and $\sE\otimes_{\sO_X}\omega_X(D)$. 
If $\theta$ is merely parabolic, we say that $(\sE,\theta)$ is a \emph{weakly parabolic Higgs bundle}.
\edefinition

\bremark\label{rem:abeliancat}
The category of strongly/weakly parabolic filtered Higgs sheaves is an abelian category with enough injectives which contains the category of strongly/weakly parabolic Higgs bundles as a full subcategory. See \cite[Definition 2.2]{Yo95}. 
\eremark

One can similarly define the stability condition for strongly/weakly parabolic Higgs bundles. A strongly/weakly parabolic Higgs bundle $(\sE,\theta)$ is called $\alpha$-semi-stable (resp. stable) if for all proper sub-Higgs bundles $(F,\theta|_F)\subsetneq (E,\theta)$, one has $\pmu(\sF)\leq \pmu(\sE)$ (resp. $<$). Here by a sub-Higgs bundle, we mean a Higgs bundle $(F,\theta|_{F})$, where $F$ is a subbundle such that $\theta(F)\subset F\otimes\omega_X(D)$. Similar to the vector bundle case, an $\alpha$-stable strongly/weakly parabolic Higgs bundle $(\sE,\theta)$ is simple, i.e., $ParEnd(\sE,\theta)\cong k$.  

As mentioned in the introduction,  Geometric Invariant Theory shows that the $\alpha$-stable objects define moduli spaces $\Higgs^W_{P,\alpha}$ and $\Higgs_{P,\alpha}$ that are normal quasi-projective varieties (see\cite{MS80}, \cite{Yo93C} and \cite{Yo95}). We have 
\beq\label{dimofweakparabolichiggs} \dim(\Higgs^W_{P,\alpha} )= (2g-2+\deg(D))r^2 + 1,\eeq
(see \cite[Theorem 5.2]{Yo95}), and $\Higgs_{P,\alpha}$ is a closed subvariety of $\Higgs^W_{P,\alpha}$ with
\[\dim(\Higgs_{P,\alpha}) =2(g-1)r^2 +2+ \sum_{x\in D}2\dim(G/P_{x})=2\dim(\M_{P,\alpha}),\]
(see \cite[Remark 5.1]{Yo95}).  

For a generic $\alpha$, a bundle (or pair) is $\alpha$-semistable if and only if it is $\alpha$-stable. In these cases, the moduli spaces $\M_{P,\alpha}$, $\Higgs_{P,\alpha}$ and $\Higgs^W_{P,\alpha}$ are nonsingular. 

\vspace{10pt}

\begin{assumption}\label{assm 0}
	In  what  follows,  we will always assume the parabolic weight $\alpha$ is generic in this sense. For simplicity, we will always drop the weight $\alpha$ in the subscripts and abbreviate the parabolic structure $(P,\alpha)$ as $P$.
\end{assumption}

\section{The parabolic Hitchin maps} \label{sec 3}

Weakly parabolic Hitchin maps are defined by Yokogawa \cite[Page 495]{Yo93C}. According to \cite[Theorem 4.6]{Yo93C} and \cite[Remark 5.1]{Yo95},  $\Higgs^W_P$ is a good quotient by an algebraic group $\PGL(V)$ of some $\PGL(V)$-scheme $\mathcal{Q}$. In fact, since we assume the weight $\alpha$ is generic, see Assumption \ref{assm 0}, $\Higgs^W_P$ is a  geometric quotient. On $X_{\sQ}=X\times \sQ$  one has a universal family of stable weakly parabolic Higgs bundles $(\tilde{\sE},\tilde{\theta})$ and a surjection $V\otimes_k \sO_{X_\sQ}\twoheadrightarrow \tilde{\sE}$. Thus the coefficients of the characteristic polynomial of $\tilde{\theta}$ 
\begin{displaymath}
	(a_1(\tilde{\theta}),\cdots, a_n(\tilde{\theta})):=(\tr_{\sO_{X_\sQ}}(\tilde{\theta}),\tr_{\sO_{X_\sQ}}(\wedge^2_{\sO_{X_\sQ}}\tilde{\theta}), \cdots,\wedge^r_{\sO_{X_\sQ}}\tilde{\theta})
\end{displaymath}
determine a section of $\bigoplus_{i=1}^r(\pi_X^*\omega_X(D))^{\otimes i}$ over $X_{\sQ}$. We write $\mathbf{H}^0(X,(\omega_X(D))^{\otimes i})$
for the affine variety underlying $H^0(X,(\omega_X(D))^{\otimes i})$.
Since 
\[
H^{0}(X_\sQ,\bigoplus_{i=1}^r(\pi_X^*\omega_X(D))^{\otimes i}) =\Hom_{\Sch}(\sQ,\prod_{i=1}^r \mathbf{H}^0(X,(\omega_X(D))^{\otimes i})),
\]
the characteristic polynomial of $\tilde{\theta}$ defines a morphism of schemes  
\[
\sQ\to \prod_{i=1}^r \mathbf{H}^0(X,(\omega_X(D))^{\otimes i}).
\]
This map is equivariant under the $\PGL(V)$-action \cite[p.\ 495]{Yo93C} and hence factors through the moduli space $\Higgs^W_P$.  

\bdefinition 
The \emph{Hitchin base space} for the pair $(X,D)$ is 
\[
\sH:=\prod_{i=1}^r \mathbf{H}^0(X,(\omega_X(D))^{\otimes i}),
\] 
and 
\[
h_P^W: \Higgs^W_P\to \sH
\] 
is called the \emph{weakly parabolic Hitchin map}. Notice that $\sH$ does not depends on parabolic structures, so we simply call it the \emph{Hitchin base space}.
\edefinition

Note that  $h_P^W$ is pointwisely defined as $(\sE,\theta)\mapsto (a_1(\theta),\cdots,a_r(\theta))\in \sH$.
Then by the Riemann-Roch theorem, we have
\beq\label{dimofweakparabolichitchinbase} \dim (\sH) =r^2(g-1)+\frac{r(r+1)\deg(D)}{2},\eeq and in general a generic fiber of $h^W_{P}$ has a smaller dimension than $\sH$. 

We shall now define an affine subspace $\sH_P$ of $\sH$ (which, as the notation indicates, depends on $P$) such that $h_{P}^W(\Higgs_P)\subset \sH_P$.
Baraglia and Kamgarpour \cite{BK18} have already determined parabolic Hitchin base spaces for all classical groups\footnote{Their notation for $\sH_P$ is $\sA_{\sG,P}$.}. Moreover when $k=\mathbb{C}$, they show in \cite{BKV18} that $h_{P}$ is surjective by symplectic methods. 
We here do the calculation for $G=\GL_{r}$, not just for completeness, but also because it involves some facts of Young tableaux which we will need later. 
Our proof is simple and direct. In Section \ref{sec bnr}, we will give a proof of surjectivity over general $k$.
\\

\subsection{Intermezzo on partitions}\label{Intermezzo}
A partition of $r$ is  a sequence of integers $n_{1}\geq n_{2}\geq\cdots\geq n_{\sigma}> 0$ with sum $r$. Its conjugate partition is the sequence of integers $\mu_{1}\geq\mu_{2}\geq\cdots\geq\mu_{n_{1}}>0$ (also with sum $r$) given by  
$\mu_{j}=\#\{\ell:n_{\ell}\geq j, 1\leq\ell\leq \sigma\}.$ It is customary to depict this as a Young diagram:
for example for $(n_{1},n_{2},n_{3})=(5,4,2)$,  we have the Young diagram:
\begin{displaymath}
	\large\yng(5,4,2)
\end{displaymath}
We can read the conjugate partition from the diagram: $$(\mu_{1},\mu_{2},\mu_{3},\mu_{4},\mu_{5})=(3,3,2,2,1).$$
Number the boxes as indicated:

\begin{center}
	\small\begin{ytableau}
		1&4&7&9&11\\
		2&5&8&10\\
		3&6
	\end{ytableau}
\end{center}
For each partition of $r$, we assign a level function: $j\rightarrow \gamma_{j}, 1\leq j\leq r$, such that $\gamma_{j}=l$ if and only if $$\sum_{t\leq l-1}\mu_{t}< j\leq\sum_{t\leq l}\mu_{t}.$$
For example, combined with the former numbered Young Tableau, $\gamma_{j}$ is illustrated as follows:
\begin{center}
	\begin{equation}\label{ex:young}
		\small\begin{ytableau}
			1&2&3&4&5\\
			1&2&3&4\\
			1&2
		\end{ytableau}
	\end{equation}	
\end{center}
It is clear that:
\begin{align}\label{comb fact 1}
	&\sum_{j}\gamma_{j}=\sum_{t}t\mu_{t}=\sum_{i}\sum_{j\leq n_{i}}j=\sum_{i}\frac{1}{2}n_{i}(n_{i}+1),\\
	\label{flagmui}&\sum_{i=1}^{\sigma}(n_i)^{2}=\sum_{t=1}^{n_{\sigma}}t^{2}(\mu_{t}-\mu_{t+1})
	=\sum_{t=1}^{n_{\sigma}}(2t-1)\mu_{t}.
\end{align}    

In the following, we reorder the Levi type $\{m^{j}(x)\}_{j=1}^{\sigma_{x}}$ from large to small as $\{n_{j}(x)\}_{j=1}^{\sigma_{x}}$, so that
$n_1(x)\ge n_2(x)\ge \cdots \ge n_{\sigma_x}(x)>0$. This  is a partition of $r$.
\bdefinition The \emph{parabolic Hitchin base} for the parabolic data $P$ is
\[
\mathcal{H}_{P}:=\prod_{j=1}^r\mathbf{H}^{0}\Big(X,\omega_X^{\otimes j}\otimes\sO_X\big(\sum_{x\in D}(j-\gamma_{j}(x))\cdot x\big) \Big)\subset \sH,
\]
where the right-hand side is regarded as an affine space.     
\edefinition

\blemma\label{dim formula} $\dim \mathcal{H}_{P}=\frac{1}{2}\dim \Higgs_P$.
\elemma
\begin{proof} 
	Recall that $\dim\Higgs_P=\dim T^{*}\M_{P}=2\dim \M_{P}$.	By the Riemann-Roch theorem, we have
	\begin{align*}
		\dim\mathcal{H}_{P}&=\sum_{j=1}^{r}\dim H^{0}\Big(X,\omega_X^{\otimes j}\otimes\sO_X\big(\sum\limits_{x\in D}(j-\gamma_j(x))\cdot x\big)\Big)\\
		&=1+r(1-g)+\frac{r(r+1)}{2}(2g-2)+\sum_{j=1}^{r}\sum_{x\in D}(j-\gamma_j(x))\\
		&=1+r^{2}(g-1)+\frac{r(r+1)\deg D}{2}-\sum_{x\in D}\sum_{j=1}^{r}\gamma_j(x)\\
		&=\dim(\M_P)+\frac{1}{2}\sum\limits_{x\in D}\Big(r+\sum\limits_{l=1}^{\sigma_x}m^l(x)^{2}-2\sum\limits_{j=1}^{r}\gamma_j(x)\Big)\\
		&=\frac{1}{2}\dim \Higgs_P.	
	\end{align*}
	The last equality follows from (\ref{comb fact 1}) and (\ref{flagmui}).\qed
\end{proof}

\begin{theorem}\label{image parabolic} 
	For $(\sE,\theta)\in\Higgs_P$, $h_{P}^W(\sE,\theta)\in \sH_P$. That is to say, we have 
	$$
	a_{j}(\theta)\in H^{0}\Big(X,\omega_{X}^{\otimes j}\otimes\sO_X\big(\sum_{x\in D}(j-\gamma_{j}(x))\cdot x\big)\Big).
	$$
\end{theorem}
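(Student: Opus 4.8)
The plan is to reduce everything to a local $t$-adic valuation estimate at each point of $D$. Away from $D$ the section $a_j(\theta)=\tr(\wedge^j\theta)$ is a regular section of $\omega_X^{\otimes j}$, so it suffices to show that at each $x\in D$ the pole order of $a_j(\theta)$ is at most $j-\gamma_j(x)$. I would fix $x\in D$, choose a local coordinate $t$ and a local frame $e_1,\dots,e_r$ of $E$ whose reduction mod $t$ is adapted to the flag $F^{\pt}(x)$; write $\ell(i)$ for the level (block index) of $e_i$, so that for each level $p$ the size $\#\ell^{-1}(p)$ is the corresponding block size, these sizes forming as a multiset the partition $\{n_p(x)\}$. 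Writing $\theta=A(t)\,\tfrac{dt}{t}$ with $A(t)\in\mathrm{Mat}_r(\sO_{X,x})$, the strongly parabolic condition is exactly that $A_0:=A(0)$ is strictly block-triangular for the flag, i.e. $(A_0)_{ik}=0$ whenever $\ell(i)\le\ell(k)$; equivalently $t\mid A_{ik}(t)$ whenever $\ell(i)\le\ell(k)$. Since $a_j(\theta)=c_j(t)\,(\tfrac{dt}{t})^{\otimes j}$ with $c_j(t)=\tr(\wedge^j A(t))\in\sO_{X,x}$, the desired pole bound is precisely the assertion that $\operatorname{ord}_t c_j(t)\ge\gamma_j(x)$.

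To bound $\operatorname{ord}_t c_j$ I would expand it as a sum of principal minors, $c_j(t)=\sum_{|S|=j}\det(A(t)_{S,S})=\sum_{|S|=j}\sum_{\pi\in\mathrm{Sym}(S)}\mathrm{sgn}(\pi)\prod_{i\in S}A_{i,\pi(i)}(t)$, and estimate each monomial separately. By the divisibility noted above, $\operatorname{ord}_t\prod_{i\in S}A_{i,\pi(i)}\ge\#\{i\in S:\ell(i)\le\ell(\pi(i))\}$. Hence it is enough to prove the combinatorial inequality $\#\{i\in S:\ell(i)\le\ell(\pi(i))\}\ge\gamma_j(x)$ for every $j$-element subset $S\subseteq\{1,\dots,r\}$ and every $\pi\in\mathrm{Sym}(S)$.

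I would establish this in two steps. First a weak-ascent lemma: writing $s_p(S)=\#\{i\in S:\ell(i)=p\}$, one has $\#\{i\in S:\ell(i)\le\ell(\pi(i))\}\ge\max_p s_p(S)$. Indeed, in the functional graph of $\pi$ delete the $w$ \emph{weak-ascent} edges $i\to\pi(i)$ with $\ell(i)\le\ell(\pi(i))$; the surviving edges are strict descents, hence contain no directed cycle and form a disjoint union of exactly $w$ paths (number of paths $=$ vertices $-$ edges $=|S|-(|S|-w)$). Along each path the level strictly decreases, so every level occurs at most once per path; as the $w$ paths cover $S$, each multiplicity satisfies $s_p(S)\le w$, giving the bound. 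Second, a counting step: if $\max_p s_p(S)\le\gamma_j(x)-1$, then using the Young-diagram identity $\sum_p\min(n_p(x),c)=\sum_{t\le c}\mu_t$ (the number of boxes in the first $c$ columns) we obtain $j=\sum_p s_p(S)\le\sum_p\min(n_p(x),\gamma_j(x)-1)=\sum_{t\le\gamma_j(x)-1}\mu_t<j$, where the last inequality is the very definition of $\gamma_j(x)$. This contradiction yields $\max_p s_p(S)\ge\gamma_j(x)$, and combining the two steps proves the combinatorial inequality; therefore every monomial in $c_j$ has $t$-order $\ge\gamma_j(x)$, so $\operatorname{ord}_t c_j\ge\gamma_j(x)$, as required.

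The local set-up and the minor expansion are routine; the conceptual core, and the step I expect to demand the most care, is the combinatorial inequality $\#\{i:\ell(i)\le\ell(\pi(i))\}\ge\gamma_j(x)$ --- that is, recognizing that the vanishing forced by strong parabolicity is measured by the column-filling statistic $\gamma_j$ of the conjugate partition. The weak-ascent lemma (via the path decomposition) and the identity $\sum_p\min(n_p,c)=\sum_{t\le c}\mu_t$ are precisely the facts about Young tableaux announced just before the statement, so I would isolate them as a separate lemma, in the spirit of the combinatorial identities (\ref{comb fact 1}) and (\ref{flagmui}) already recorded.
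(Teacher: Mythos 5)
Your proof is correct and takes essentially the same route as the paper's: reduce to the local statement that the coefficient $b_j$ of the characteristic polynomial (in the trivialization by $\tfrac{dt}{t}$) satisfies $v(b_j)\ge\gamma_j(x)$, and deduce this from the strict block-triangularity of $\theta$ modulo $t$ together with the combinatorics of the conjugate partition set up in the Intermezzo. The only difference is organizational: the paper groups the principal minors via the associated graded of $\wedge^{j}\sE$ and asserts a per-summand valuation bound (stated there as $\min\{\delta_1,\dots,\delta_\sigma\}$, which appears to be a slip for $\max$) before appealing to the Intermezzo, whereas your permutation-monomial expansion proves precisely that $\max$-bound (your weak-ascent/path-decomposition lemma) and the inequality $\max_p s_p(S)\ge\gamma_j(x)$ in full detail, so your write-up supplies exactly the combinatorial steps the paper leaves implicit.
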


Without loss of generality, we may assume $D=x$. We denote the characteristic polynomial of $\theta$ as $\lambda^{r}+a_{1}\lambda^{r-1}+\cdots a_{r}$ where $a_{i}=\tr(\wedge^{i}\theta)$. 

We denote the formal local ring at $x$ by $\sO$ with the natural valuation denoted by $v$. We denote its fraction field by $\mathcal{K}$. We fix a local coordinate $t$ in a formal neighborhood of $x$ and choose a local section $\frac{dt}{t}$ to get a trivialization of $\omega_{X}(x)$ near $x$. Then the characteristic polynomial around $x$ becomes 
\[f(t,\lambda):=\lambda^{r}+b_{1}\lambda^{r-1}+\cdots b_{r},\]
where $b_i\in \sO$.   
\bproof
Following the above argument, we only need to show that:    	$v(b_{i})\geq\gamma_{i}, \quad 1\leq i\leq r$.
It amounts to proving the following statement:
\begin{claim}
	Let $\sE$ be a free $\sO$-module of rank $r$ with a filtration $F^{\bullet}$ on $\sE\otimes_{\sO} k$. The sequence $\{n_{i}\}$ is $\{\dim_{k}\sfrac{F^{i-1}}{F^{i}}\}_{1}^{\sigma} $ after reordering so that $n_{1}\geq n_{2}\geq\cdots n_{\sigma}>0$. Then for $\theta\in\End_{\sO}(\sE)$ such that $\theta$ respects $F^{\bullet}$, we have $ v(\tr(\wedge_{\sO}^{i}\theta^{i}))\geq \gamma_{i}$.
\end{claim}

Now we prove the claim. Lift $F^{\bullet}$ to a filtration $\sF^{\bullet}$ on $\sE$. This induces a filtration of $\wedge_{\sO}^{i}\sE$ with associated graded $\sO$-module:
$$\bigoplus_{\delta_{1}+\cdots+\delta_{\sigma}=i}\wedge_{\sO}^{\delta_{1}}(\sF^{0}/\sF^{1})\otimes\cdots\otimes\wedge_{\sO}^{\delta_{\sigma}}(\sF^{\sigma-1}/\sF^{\sigma}).$$
Any $\theta$ as above induces a map in each summand, and the trace of this map has valuation no less than $\min\{\delta_{1},\cdots,\delta_{\sigma}\}$. Since $\tr(\wedge_{\sO}^{i}\theta^{i})$ is the sum of these traces, our claim follows from the intermezzo above. \qed
\eproof

Yokogawa \cite[Corollary 5.12, Corollary 1.6]{Yo93C} showed that $h^{W}_{P}$ is projective and $\Higgs_P\subset\Higgs^{W}_{P}$ is a closed sub-variety. By Theorem \ref{image parabolic}, the image of $\Higgs_P$ under $h^W_P$ is contained in $\sH_{P}\subset \sH$.
We denote this restriction by
$$
h_P:=h_P^W|_{\Higgs_P}:\Higgs_P\to\sH_P,
$$
and refer to it as the  \emph{strongly parabolic Hitchin map}. We conclude that:

\begin{propdef}\label{hproper} 
	The  \emph{strongly parabolic Hitchin map} for the parabolic structure $P$ is the morphism 
	$$
	h_P=h_P^W|_{\Higgs_P}:\Higgs_P\to\sH_P.
	$$
	In particular, this morphism is projective.
\end{propdef}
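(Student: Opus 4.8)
The plan is to deduce the properness of $h_P$ formally, from the known properness of the weak parabolic Hitchin map together with the cancellation property for proper morphisms. The two inputs are already in hand: by Yokogawa's results cited above, the weak parabolic Hitchin map $h_P^W\colon \Higgs^W_P\to \sH$ is projective and in particular proper, and the inclusion $\Higgs_P\hookrightarrow \Higgs^W_P$ is a closed immersion. Since closed immersions are proper and proper morphisms are stable under composition, the restriction $h_P^W|_{\Higgs_P}\colon \Higgs_P\to \sH$ --- being the composite of the closed immersion with $h_P^W$ --- is proper.

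Next I would use Theorem \ref{image parabolic} to factor this composite through the subspace $\sH_P\subset \sH$, writing $h_P^W|_{\Higgs_P}=\iota\circ h_P$, where $\iota\colon \sH_P\hookrightarrow \sH$ is the inclusion and $h_P\colon \Higgs_P\to \sH_P$ is the map in question. The point to verify is that $\iota$ is a closed immersion: each factor $H^0\!\big(X,\omega_X^{\otimes j}\otimes\sO_X(\sum_{x\in D}(j-\gamma_j(x))x)\big)$ of $\sH_P$ sits inside the corresponding factor $H^0(X,(\omega_X(D))^{\otimes j})$ of $\sH$ as a linear subspace, since $j-\gamma_j(x)\le j$ (the level function satisfies $\gamma_j(x)\ge 1$). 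Hence $\sH_P$ is a linear, in particular closed, subvariety of the affine space $\sH$, so $\iota$ is a closed immersion and therefore separated.

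It then remains to invoke the standard cancellation lemma for proper morphisms: if $\iota\circ h_P$ is proper and $\iota$ is separated, then $h_P$ is itself proper. Applying this with the factorization above yields the assertion. I do not expect any genuine obstacle in this argument, as it is entirely formal once Yokogawa's projectivity statement and the closedness of $\Higgs_P\subset\Higgs^W_P$ are granted; the only point requiring a moment's care is the (elementary) verification that $\sH_P$ is a closed linear subspace of $\sH$, which is exactly the inequality $\gamma_j(x)\ge 1$ recorded above.
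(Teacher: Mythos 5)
Your argument is correct and is essentially the paper's own: the authors likewise quote Yokogawa's projectivity of $h_P^W$ together with the closedness of $\Higgs_P$ in $\Higgs^W_P$, and then use Theorem \ref{image parabolic} to land in $\sH_P\subset\sH$. You have merely made explicit the two formal steps the paper leaves implicit (that $\sH_P$ is a closed linear subspace of the affine space $\sH$, and the cancellation lemma for proper morphisms), both of which check out.
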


\subsection{Spectral curves} In the next two sections, we determine the generic fibers of the strongly parabolic Hitchin map. As in \cite{BNR}, we introduce spectral curves to realize the Hitchin fibers as a particular kind of sheaves on spectral curves. We should also point out that the classical Beauville-Narasimhan-Ramanan correspondence also holds in positive characteristics, see for example \cite[Corollary 4.5]{BB07}.

One observes that $\sH$ is also the Hitchin base of $\omega_{X}(D)$-valued Higgs bundles. So for $a\in \sH$, one has the spectral curve  $X_a\subset \P_X(\sO_X\oplus \omega_X(D))$ for $\omega_X(D)$-valued Higgs bundles, cut out by the characteristic polynomial $a\in \sH$. Denote the projection by $\pi:X_a\to X$. One can compute the arithmetic genus as:
$$p_{a}(X_{a})=1-\chi(X,\pi_{*}\sO_{X_{a}})
=1+r^{2}(g-1)+\frac{r(r-1)}{2}\deg(D).
$$

When we work 
in the weakly parabolic case, $X_a$ is smooth for generic $a\in \sH$.  On the other hand, for any $a\in \sH_P$, the spectral curve $X_a$ is singular (except for the Borel case). Yet for a generic $a\in\sH_{P}$, $X_{a}$ is integral, totally ramified at $x\in D$ and smooth elsewhere. Please refer to the Appendix.

\section{Generic Fibers of the Strongly Parabolic Hitchin Map}\label{sec bnr}

In this section, we determine the generic fibers of the strongly parabolic Hitchin map. We will start from a local analysis, and then derive from it the parabolic BNR correspondence as stated in Theorem \ref{main02}. The analysis of the local case is also of its own interest.

\subsection{Local Strongly Parabolic Higgs Bundles}\label{local analysis}
Let us now clarify our local strongly parabolic Higgs bundles:
\bdefinition\label{local data} A \emph{local strongly parabolic Higgs bundle} is a triple $(V,F^\pt,\theta)$ satisfying the following conditions:
\begin{itemize}
	\item[(a)] $V$ is a free $\sO=k[[t]]$-module of rank $r$, endowed with a filtration $F^\pt V$: $$V=V^{0}\supset V^1\supset\cdots\supset V^{\sigma}=t\cdot V,$$
	with $\dim\sfrac{V^{i}}{V^{i+1}}=m_{i+1}$. As before we rearrange $(m_{i})$ as $(n_{i})$ to give a partition of $r$.
	\item[(b)] $\theta:V\rightarrow V$ is a $k[[t]]$-linear morphism and $\theta(V^{i})\subset V^{i+1}$. 
\end{itemize}
We call $(V,F^\pt,\theta)$ a \emph{distinguished local strongly parabolic Higgs bundle} if in addition, the following holds:
\begin{itemize}
	\item [(c)] $f(\lambda,t)$ is the characteristic polynomial of $\theta$ and has a factorization, $$f(\lambda,t)=\prod_{i=1}^{n_{1}}f_{i},$$
	such that each $f_{i}$ is an Eisenstein polynomial with $\deg(f_{i})=\mu_{i}$, where $(\mu_1,\ldots,\mu_{n_1})$ as before is the conjugate partition of the partition $(n_i)$. Besides, if $\deg f_{i}=\deg f_{j}$, then the difference of their constant terms lies in $t\cdot k[[t]]\backslash t^{2}\cdot k[[t]]$.
\end{itemize}
\edefinition

\begin{remark}
	A monic polynomial $g=\lambda^{d}+\sum_{1\leq i\leq d} a_i\lambda^{d-i}\in k[[t]][\lambda]$ is called Eisentein if the coefficients of $g$ satisfy that $v(a_{i})\geq 1$ for $1\leq i \leq d-1$, and $v(a_{d})=1$ with the valuation $v$ on $k[[t]]$ defined by the maximal ideal $(t)$. By \cite[Chapter I, Section 6, Proposition 17]{Ser79}, if $g$ is Eisentein then $\frac{k[[t]][\lambda]}{(g)}$ is a discrete valuation ring (DVR). The last condition in (c) on ``constant terms" is essentially used in Equation \eqref{eq:diff const terms}, to get an inverse of a certain power series of linear morphisms.
\end{remark}

We write $A:=\sO[\lambda]/(f)$, and $A_i:=k[[t]][\lambda]/(f_i(t,\lambda))$. Since each $f_{i}$ is an Eisenstein polynomial, each $A_{i}$ is a DVR and we put $\tilde{A}:=\prod_{i=1}^{n_{1}}A_i$. Then we have a natural injection $A\hookrightarrow \tilde{A}$ and $\tilde{A}$ can be treated as the normalization of $A$.

\btheorem[Local Parabolic BNR Correspondence]
A distinguished local strongly parabolic Higgs bundle $(V,F^\pt,\theta)$ induces a principal $\tilde{A}$-module structure on $V$.
\etheorem
\vspace*{5pt}

\noindent We shall prove this theorem through the following two propositions. From Subsection \ref{Intermezzo}, we know that $\sigma=\mu_{1}$. Here we recall that $\sigma$ is the length of the filtration. Thus $\theta^{\mu_{1}}(v)\in tV$ for any $v\in V$. 

We define $\Ker f_{i}:=\{v\in V\ |\ f_{i}(\theta)(v)=0\}$. Let $w$ be an element of $V$ and $g$ be an element in $\sO$. If $g\cdot w\in \Ker f_{i}$, then $g\cdot f_{i}(\theta)(w)=f_{i}(g\cdot w)=0$ and as a result, $w\in \Ker(f_{i})$. Thus $V/\Ker f_{i}$ is torsion-free, which means that $\Ker f_{i}$ is a direct summand. 

\begin{proposition}\label{first step direct summand}
	Let $(V,F^\pt,\theta)$ be a distinguished local strongly parabolic Higgs bundle, then the image of 
	$$\Ker f_{1}\rightarrow V\rightarrow {V/\Ker f_{i}}$$
	is $\Ker\bar{f}_{1}$, for $1< i\leq n_{1}$.
	Here $\bar{f}_{1}$ is the induced map of $f_{1}$ on $V/\Ker f_{i}$.	In particular, $\Ker f_{1}\oplus \Ker f_{i}$ is a direct summand of $V$.
\end{proposition}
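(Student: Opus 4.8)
The plan is to collapse both assertions of the Proposition into a single saturation statement and then to locate precisely where the hypotheses (b) and (c) have to be used. Throughout I regard $V$ as a module over $A=\sO[\lambda]/(f)$ with $\lambda$ acting as $\theta$ (Cayley--Hamilton gives $f(\theta)=0$). Since the $f_i$ are distinct irreducible (Eisenstein) polynomials over $\mathcal{K}$, inverting $t$ splits $A\otimes_\sO\mathcal{K}=\prod_i L_i$ into a product of fields $L_i=\mathcal{K}[\lambda]/(f_i)$, and correspondingly $V\otimes_\sO\mathcal{K}=\bigoplus_i V_i$. One checks that $\Ker f_j=V\cap V_j$ and that it is saturated: $f_j(\theta)\colon V\to V$ has kernel $\Ker f_j$ and identifies $V/\Ker f_j$ with $\Im f_j(\theta)\subseteq V$, which is free over the DVR $\sO$; hence $\Ker f_j$ is a direct summand. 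This is the remark recorded just before the Proposition.

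First I would reformulate both claims as the single equality
\[
V\cap(V_1\oplus V_i)=\Ker f_1\oplus\Ker f_i .
\]
Indeed, the image of $\Ker f_1$ in $V/\Ker f_i$ is $(\Ker f_1+\Ker f_i)/\Ker f_i$, while a short computation over $\mathcal{K}$ shows $\Ker\bar f_1=\{w\in V:\ f_1(\theta)w\in\Ker f_i\}/\Ker f_i$ and that $\{w:\ f_1(\theta)w\in\Ker f_i\}=V\cap(V_1\oplus V_i)$ (the components of $w$ in the $V_k$ with $k\neq 1,i$ are killed by the invertible operators $f_1(\theta)|_{V_k}$, hence vanish). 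Since $V_1\cap V_i=0$, the sum $\Ker f_1+\Ker f_i$ is automatically direct, so the displayed equality is simultaneously the main statement (``image $=\Ker\bar f_1$'') and the final clause (``$\Ker f_1\oplus\Ker f_i$ is a direct summand''). The inclusion $\supseteq$ is clear, so everything comes down to showing $\Ker f_1\oplus\Ker f_i$ is saturated in $V$.

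Next I would extract the formal content of this saturation. Take $v\in V\cap(V_1\oplus V_i)$ and write $v=v_1+v_i$ over $\mathcal{K}$. A Bézout relation $Pf_1+Qf_i=\Res(f_1,f_i)$ in $\sO[\lambda]$ shows that $Q(\theta)f_i(\theta)$ acts on $V_1\oplus V_i$ as $\Res(f_1,f_i)$ times the projector onto $V_1$; hence $\Res(f_1,f_i)\cdot v_1=Q(\theta)f_i(\theta)v\in V\cap V_1=\Ker f_1$, and symmetrically for $v_i$. Thus $\Res(f_1,f_i)\cdot\big(V\cap(V_1\oplus V_i)\big)\subseteq\Ker f_1\oplus\Ker f_i$, so the obstruction $\big(V\cap(V_1\oplus V_i)\big)/(\Ker f_1\oplus\Ker f_i)$ is annihilated by the resultant.

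The hard part --- and the step I expect to be the main obstacle --- is upgrading this to vanishing, because for two distinct Eisenstein polynomials $\Res(f_1,f_i)$ is a genuine positive power of $t$, never a unit (e.g.\ for $f_1=\lambda^2-t$ and $f_i=\lambda-2t$ one has $\Res(f_1,f_i)=4t^2-t$, of valuation $1$). So the resultant estimate alone only bounds the obstruction by a $t$-power, and the vanishing must genuinely use (b) and (c). The mechanism I would pursue is this: modulo $t$ every $f_k$ degenerates to $\lambda^{\mu_k}$ and $\bar\theta$ is nilpotent (indeed $\theta^\sigma\in tV$ by strong parabolicity, with $\sigma=\mu_1$), so the branches $V_k$ are invisible to first order and separate only through the leading coefficients $u_k\in k^\times$ of the constant terms, which (c) forces to be pairwise distinct. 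I would use the filtration bounds $\theta(V^j)\subset V^{j+1}$ to control the $t$-adic sizes of the entries of $\theta$, and then build integral projectors onto the $V_k$ by Lagrange interpolation in a suitable operator manufactured from $\theta$ (using that the differences $u_1-u_k$ are units), thereby showing $\mathrm{proj}_i\big(V\cap(V_1\oplus V_i)\big)\subseteq V$ and killing the resultant-torsion. Verifying that such integral projectors exist --- equivalently, that the distinctness in (c) exactly compensates the $t$-divisibility forced by strong parabolicity --- is the crux; the case $n_1=2$, where $V\cap(V_1\oplus V_i)=V$ already gives $V=\Ker f_1\oplus\Ker f_i$, is a useful sanity check and the base of an induction on $n_1$.
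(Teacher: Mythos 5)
Your reduction of both assertions to the single saturation statement $V\cap(V_1\oplus V_i)=\Ker f_1\oplus\Ker f_i$ is correct, and your resultant computation correctly shows the obstruction module is killed by a power of $t$. But the proof has a genuine gap exactly where you say the crux is: the passage from ``$t$-power torsion'' to ``zero'' is never carried out. You name the right ingredients --- $\theta^{\sigma}V\subset tV$ from strong parabolicity and the condition that the constant terms of distinct $f_k$ differ modulo $t^2$ --- but the proposed mechanism (``build integral projectors onto the $V_k$ by Lagrange interpolation in a suitable operator manufactured from $\theta$'') is only a plan, and it is not obvious it goes through as stated: when the $\mu_k$ are not all equal there is no single operator of the form $\theta^{m}/t$ whose mod-$t$ eigenvalues separate the branches, so the interpolation argument would itself need the kind of branch-by-branch analysis you are trying to avoid. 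As it stands, the Proposition is not proved.

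The paper closes this gap with a short direct computation that you could substitute for the missing step. Take $v\in V$ with $w:=f_1(\theta)v\in\Ker f_i$; one must produce $w'\in\Ker f_i$ with $f_1(\theta)w'=w$, for then $v-w'\in\Ker f_1$ has the same image as $v$ in $V/\Ker f_i$. First, $w\in tV$: writing $f_1=\lambda^{\mu_1}+\alpha_1$ with $v(\alpha_1)=1$, we have $f_1(\theta)v\equiv\theta^{\mu_1}v\pmod{tV}$ and $\theta^{\mu_1}=\theta^{\sigma}$ maps $V$ into $tV$. Second, on $\Ker f_i$ the operator $f_1(\theta)$ acts as $-\alpha_i\theta^{\mu_1-\mu_i}+\alpha_1=t\,\phi_t(\theta)$, and $\phi_t(\theta)$ is invertible on $V$ precisely because $t^2\nmid(\alpha_1-\alpha_i)$ (when $\mu_1=\mu_i$ this says $(\alpha_1-\alpha_i)/t$ is a unit; when $\mu_1>\mu_i$ one uses that $\alpha_1/t$ is a unit and $(\alpha_i/t)\theta^{\mu_1-\mu_i}$ is topologically nilpotent). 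Then $w'=\phi_t(\theta)^{-1}(w/t)$ does the job. This is the same idea as your ``integral inverse built from $\theta$,'' but executed on the two relevant branches only, which is what makes it work uniformly in the $\mu_k$.
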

\begin{proof}
	We denote the natural quotient map $V\rightarrow V/\Ker f_{i}$ by $q_{i}$. Then
	$$q_{i}^{-1}(\Ker\bar{f}_{1})=\{v\in V\ |\ f_{1}(v)\in \Ker f_{i}\}.$$
	For simplicity we write $f_{1}$ as $\lambda^{\mu_{1}}+\alpha_{1}$, where $\alpha_{1}\in tk[[t]]\backslash t^{2}k[[t]]$ by the genericity condition. We denote $f_{1}(v)=w\in \Ker f_{i}$. By definition, to show $q_{i}(\Ker f_{1})=\Ker\bar{f}_{1}$, it suffices to show $$\text{there exists } w^{'}\in \Ker f_{i},\,  \text{ such that } f_{1}(w^{'})=w.$$
	This amounts to solving the following linear equations:
	
	\[\left\{ \begin{aligned} (\theta^{\mu_{1}}+\alpha_{1})\ w^{'} & =w\\
		(\theta^{\mu_{i}}+\alpha_{i})\ w^{'}& = 0
	\end{aligned}\right.\]
	i.e.,	$(-\alpha_{i}\ \theta^{\mu_{1}-\mu_{i}}+\alpha_{1})\ w^{'}=w$.
	
	It is easy to see that $\theta$-action on $V$ is continuous with respect to $t$-adic topology on $V$, and thus $V$ can be treated as $ k[[t]][[\lambda]]$-module. 	We rewrite:
	\begin{equation}\label{eq:diff const terms}
		(-\alpha_{i}\theta^{\mu_{1}-\mu_{i}}+\alpha_{1})=t\phi_{t}(\theta),
	\end{equation}
	then $\phi_{t}(\theta)^{-1}$ is a well-defined map on $V$, since we assume that if $\deg f_1=\deg f_{i}$ then the constant term of $\alpha_{i}-\alpha_{1}$ is not in $t^{2}k[[t]]$.

	Notice that $f_{1}(v)=w$ implies $\theta^{\mu_{1}}(v)\equiv w (\text{mod }  t)$. Since $\theta^{\mu_{1}}v\in tV$, we know that $w\in tV$, then we can find a (unique) $$w^{'}=\phi_{t}^{-1}(w/t),$$ such that $q_{i}(v-w^{'})=q_{i}(v)$ and $v-w^{'}\in \Ker f_{1}$. Thus $q_{i}:\Ker f_{1}\rightarrow \Ker\bar{f}_{1}$ is surjective.
	
	Since $\Ker\bar{f}_{1}$ is a direct summand of $V/\ker f_{i}$, 
	$\Ker f_{1}\oplus \Ker f_{i}$ is a direct summand of $V$. \qed
\end{proof}
\begin{proposition}\label{decomposition we want}
	Let $(V,F^\pt,\theta)$ be a distinguished local strongly parabolic Higgs bundle, then we have the following decomposition:
	$$V\cong \bigoplus_{i=1,\ldots,n_{1}}\Ker f_{i}.$$
	That is to say $V$ is a principal $\tilde{A}$-module. Moreover, for each $V^{j}$, $0\leq j\leq \sigma$, we still have decompositions $$V^j\cong \bigoplus_{i=1,\ldots,n_{1}}( V^j\cap\Ker f_{i}).$$
\end{proposition}

\begin{remark}
	It is obvious that we \textit{cannot} in general lift an $A$-module structure to an $\tilde{A}$-module structure. For example, as an $A$ module, $A$ itself would not have a $\tilde{A}$ module structure in general. The reason lies in that over a principal $A$-module, we do not have a filtration $F^\pt$ of Levi type $(m^1, m^2,\ldots,m^{\sigma})$, and a local Higgs field $\theta$ that strongly preserves it. This proposition actually shows the effect of the parabolic condition on the local structure of Higgs bundles.
\end{remark}
\bproof
We prove this by inductions both on the rank of $V$ and the number of irreducible factors of the characteristic polynomial of $\theta$. From Proposition \ref{first step direct summand}, we know that $\Ker f_{1}\oplus \Ker f_{i}$ is direct summand of $V$ for $2\leq i\leq n_{1}$.

Let us now consider the quotient map:
$$q_{1}:V\rightarrow V/\Ker f_{1}.$$
Since $\Ker f_1\oplus\Ker f_i$ is a direct summand of $V$ by Proposition \ref{first step direct summand}, $q_{1}(\Ker f_{i})$ is a direct summand and is contained in $\Ker\bar{f_{i}}\subset V/\Ker f_{1}$.
Because $\Ker f_{i}\cap \Ker f_{1}=\{0\}$, $q_{1}$ is injective when restricted to $\Ker f_{i}$. By passing to $V\otimes_{\sO} K$, we have the obvious decomposition:
$$V\otimes_{\sO} K=\bigoplus_{i=1}^{n_{1}}\Ker f_{i}\otimes_{\sO}K.$$
We know that $\text{rk}(\Ker f_{i})=\text{rk}(\Ker\bar{f}_{i})$, 
then:
$$q_{1}(\Ker f_{i})=\Ker\bar{f}_{i}.$$
Thus we only need to prove that:
$$V/\Ker f_{1}=\bigoplus_{i=2}^{n_{1}}\Ker\bar{f}_{i}.$$

Recall that $\theta$ acts on $V/\Ker f_{1}$ with characteristic polynomial $\prod_{i=2}^{\sigma}f_{i}$. The filtration on $V$ actually induces a filtration on $V/\Ker f_{1}$.\footnote{Because $\Ker f_{1}\cap V_{j}$ is a direct summand of $V_{j}$.} To use induction, we only need to show that the length of this filtration is $\mu_{2}$. This follows from that $\Ker f_{1}$ is rank one module over $A_{1}$ which is a DVR. Then by induction, we have decomposition on $V/\Ker f_{1}$, i.e:
\begin{equation}
	V/\Ker f_{1}\cong \bigoplus_{i=2,\ldots,n_{1}}\Ker \bar{f}_i.
\end{equation}
As $q_{1}:\Ker f_{i}\rightarrow \Ker\bar{f}_i$ is surjective, we have the decomposition.

Now we prove the last assertion. Indeed, consider the following filtration on $V^{j}$:
\[
V^{j}\supset V^{j+1}\supset\cdots\supset tV\supset tV^{1}\supset\cdots\supset tV^{j}.
\]
and $\theta:V^{j}\rightarrow V^{j}$. It also satisfies conditions (a),(b),(c) in Definition \ref{local data}, and thus $V^{j}$ has the decomposition:
\[
V^{j}=\oplus_{i=1,\ldots, n_{1}}\Ker f_{i}|_{V^{j}}=\oplus_{i=1,\ldots, n_{1}} (V^{j}\cap\Ker f_{i}).
\]
We finish the proof.\qed
\eproof

To capture the parabolic structure, we define the ``Young diagram filtration" for a principal {$\tilde A$}-module.

\begin{propdef}\label{propdef:young filtration} The filtration $F^\pt$ on $V$ can be canonically recovered by the principal $\tilde A$-module structure on $V$ which we call the Young diagram filtration.
\end{propdef}

\begin{proof}
	
	Recall that $V\cong \bigoplus_{i=1,\ldots,n_{1}}\Ker f_{i}$ as in Proposition \ref{decomposition we want}, and for each $i$, $\Ker f_{i}$ is a principal module of $A_i=k[[t]][\lambda]/(f_i(t,\lambda))$, where $f_i(\lambda, t)$ is an Eisenstein polynomial. So there is a filtration on $\Ker f_i$: $$\Ker f_i\supset \lambda \Ker f_i \supset \cdots \supset \lambda^{\mu_i}\Ker f_i,$$ such that each graded piece has dimension 1 as a $k$ vector space. Alternatively, if we restrict the filtration $F^\pt$ on $V$ to $\Ker f_i$, we get another filtration: $$\Ker f_i\supset V^1\cap \Ker f_i \supset \cdots \supset tV\cap \Ker f_i=\lambda^{\mu_i}\Ker f_i.$$ Since $\theta(V^i)=\lambda V^i\subset V^{i+1}$, we see that each graded piece of this filtration has dimension $1$ or $0$. So if we eliminate the abundant terms in the second filtration, these two filtrations on $\Ker f_i$ should be the same and hence $V^j\cap \Ker f_i=\lambda^{a_{ij}}\Ker f_i$ for some $a_{ij}\in \N$.
	
	Applying Proposition \ref{decomposition we want}, we know that the filtration $F^\pt$ on $V$ can be written as: $$V\cong \bigoplus_{i=1,\ldots,n_{1}}\Ker f_{i}\supset \bigoplus_{i=1,\ldots,n_{1}}\lambda^{a_{i1}}\Ker f_{i}\supset \bigoplus_{i=1,\ldots,n_{1}}\lambda^{a_{i2}}\Ker f_{i}\supset \cdots \supset \bigoplus_{i=1,\ldots,n_{1}}\lambda^{a_{i\sigma}}\Ker f_{i}.$$
	In other words, the filtration $F^\pt$ is totally determined by the $\tilde A$-module structure (multiplied by $\lambda$ and direct sum decomposition) and the numbers $\{a_{ij}\}$.
	
	We claim that those $\{a_{ij}\}$ can be recovered by the Levi type $(m^1,\ldots, m^\sigma)$ of $F^\pt$. If so, then we see that the filtration $F^\pt$ on $V$ can be uniquely recovered by the $\tilde{A}$-module structure on $V$ provided with the fixed Levi type $(m^1,\ldots, m^\sigma)$.
	
	We now prove the claim. We write $b_{ij}:=a_{ij}-a_{i,j-1}$. Then $\{b_{ij}\}$ satisfy the following equations:
	\[	\sum_{i}b_{ij}=m^i;\;\ 
	\sum_{j}b_{ij}=\mu_i;\;\ 
	b_{ij}\in \{0,1\} .
	\]

	To prove the claim, it suffices to prove that there exists a unique $B=(b_{ij})$ with entry $0$ or $1$ such that $$(1, \ldots, 1)B= (m^1, \ldots, m^{\sigma})\ \ \text{and}\ \ (1, \ldots, 1)B^{\text{T}}= (\mu_1, \ldots, \mu_{n_1}).$$
	
	Notice that $(n_1 \geq  \cdots \geq n_{\sigma})$ and $(\mu_1\geq \cdots \geq \mu_{n_1})$ are conjugate partitions, then there exists a unique matrix $\tilde{B}$ by the Young diagram such that
	$\tilde{B}=(\tilde{b}_{ij})$ has entries $0$ or $1$ and $$(1, \ldots, 1)\tilde{B}= (n_1, \cdots, n_{\sigma})\ \ \text{and}\ \ (1, \ldots, 1)\tilde{B}^{\text{T}}= (\mu_1, \ldots, \mu_{n_1}).$$
	
	It indicates that the matrix $B$ exists uniquely since $\{m^i\}$ is a rearrangement of $\{n_i\}$.\qed
	
\end{proof}
\bremark In particular, if the parabolic structure is Borel, the spectral curve is already smooth. In our local case, we have $A=\tilde A$. In a distinguished local strongly parabolic Higgs bundle $(V,F^\pt,\theta)$, the filtration $F^\pt$ is a filtration of $A$-modules and indeed a filtration of $\tilde A$-module since it is preserved by $\theta$. Distinguishness yields the isomorphism $V\cong \tilde A$ and the strong parabolic condition of $\theta$ implies $F^\pt$ is finer than the $(\lambda)$-adic filtration. Then the Levi type of the grading forces the isomrophism $(V,F^\pt)\cong (\tilde A,(\lambda)\text{-adic})$, i.e., the filtration $F^\pt$ is completely determined by the $\tilde A$-module structure on $V$.
\eremark

In the remaining part of the section, for simplicity, we assume $D=x$. Our first goal is to normalize the singular spectral curve $X_{a}$ and analyze the local properties of its normalization.

\subsection{Normalization of spectral curves}
We denote $N:\tilde{X}_{a}\to X_a$ as the normalization of $X_{a}$ and denote by $\tilde{\pi}$ the composition map: 
$$\tilde{\pi}: \tilde{X}_{a}\xrightarrow{N} X_{a}\xrightarrow{\pi} X.$$
As in Theorem \ref{image parabolic}, $f\in \sO[\lambda]\cong k[[t]][\lambda]$ defines the spectral curve locally, so that the formal completion of the local ring of $X_a$ at $x$ is $A:=\sO[\lambda]/(f)$. Notice that $\Spec (A)$ and $X_a-\{\pi^{-1}(x)\}$ form an fpqc covering of $X_a$. Since $X_a-\{\pi^{-1}(x)\}$ is smooth, we only need to construct the normalization of $\Spec(A)$. 

\begin{figure}[h]
	\centering
	\includegraphics[width=\textwidth]{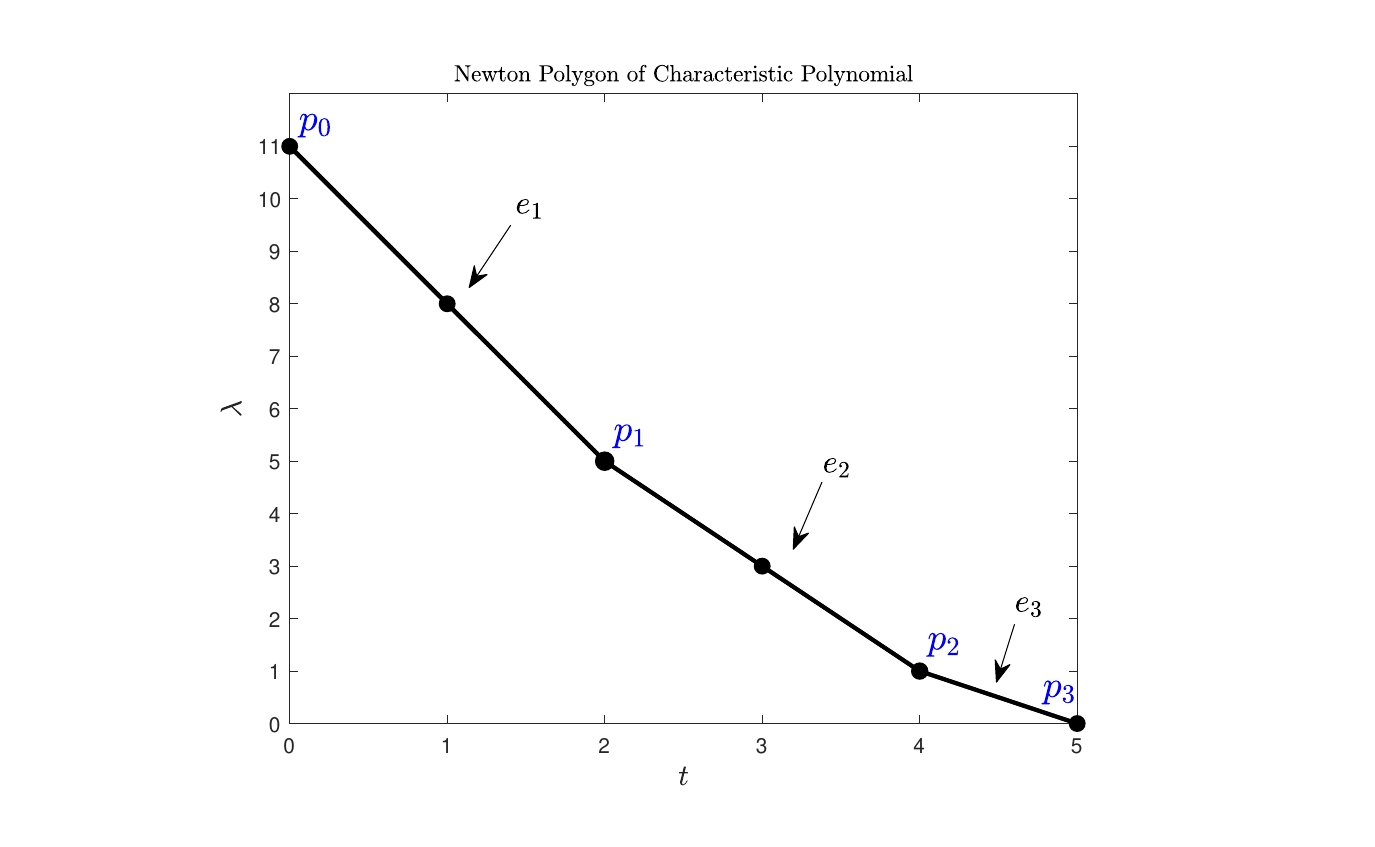}
	\caption{Newton Polygon of Characteristic Polynomial}
	\label{Huawei}
\end{figure}

From Theorem \ref{image parabolic} in Subsection \ref{Intermezzo}, there is an open subset $U_{\text{val}}\subset \sH_P$ such that for all $a\in U_{\text{val}}$, the coefficient $b_i\in \sO$ has the minimal valuation $\gamma_i$ determined by the parabolic data. From now on, we assume $a\in U_{\text{val}}$ and denote the Newton polygon of the corresponding characteristic polynomial $f$ by $\Gamma$. To give a better picture, we put  Figure \ref{Huawei} above which is the Newton Polygon of a characteristic polynomial corresponding to Example \ref{ex:young} in Subsection \ref{Intermezzo}.

We define $C=\Gamma+\R_{\ge 0}^2$, so that $\Gamma$ determines $C$ which is a closed convex subset of $\R_{\ge 0}^2$. Let $p_0, p_1, \dots ,p_s$, be the ``singular" points of $\partial C$:
points where it has an angle $<\pi$ (so that $p_s$ lies on the $x$-axis).  The standard theory of toric modifications was developed in the 1970's and is due to several people. For the construction we refer the readers to the introduction paper \cite{Oka09} and references therein. It assigns to $\Gamma$ a  toric modification $\pi: T_\Gamma\to \A^2$ of $\A^2$, where $T_\Gamma$ is a normal variety.

The morphism $\pi$ is proper and is an isomorphism over $\A^2\backslash \{(0,0)\}$. The exceptional locus $\pi^{-1}(0,0)$ is the union of irreducible components $\{D_e\}$, where $e$ runs over the edges of $\Gamma$. 
For every edge $e$ of $\Gamma$, denote by $f_e$ the subsum of $f$ over $e\cap \Z^2$.  

\begin{assumption}\label{generic on boud}
	We impose the genericity condition that all these roots of $f_{e}$
	are nonzero and pairwise distinct for all $e\in\partial \Gamma$. This is the concept ``non-degeneracy" in \cite{Oka09}. 
\end{assumption} 

Under this assumption, according to \cite[Theorem 22]{Oka09}, the strict transform $\hat Z(f)$ of $Z(f)\subset \A^{2}$ in  $T_\Gamma$ is the normalization of $Z(f)$, and meets  $D_e$ transversally in a set that can be effectively indexed by the connected components of $e\backslash \Z^2$. In particular, $Z(f)$ has as many branches at the origin as connected components of $\Gamma\backslash \mathbb{Z}^2$.

In our case, the slope of $e$ is $-\mu_{e}$ for some $\mu_{e}\in\{\mu_{1},\ldots,\mu_{n_1}\}$, and each branch of $Z(f)$ whose strict transform meets $D_e$ transversally is totally ramified over $x$ with ramification degree $\mu_{e}$. By \cite[Chapter I, Section 6, Proposition 18]{Ser79}, the formal local ring of such a point is a DVR totally ramified over $k[[t]]$ since $k$ is algebraically closed and the characteristic polynomials of its uniformizers are Eisenstein polynomials of degree $\mu_{e}$. In particular, under our notations here, we can choose $\lambda$ as the uniformizer, and thus $\lambda$ has to satisfy an Eisentein polynomial for each ramification point over $x$. To conclude:
\begin{proposition}\label{decomposition of char}
	Under Assumption \ref{generic on boud}, $f$ decomposes in $k[[t,\lambda]]$ into a product of Eisenstein polynomials $f=\prod_{i=1}^{n_1} f_i$. Exactly  $\#\{i|\mu_{i}=\mu_{e}\}$ of them are of degree $\mu_{e}$ (but the difference of two such have their constant terms not divisible by $t^2$, since the roots of $f_e$ are pairwise different), and $\prod_{i=1}^{n_{1}} k[[t]][\lambda]/(f_i)$ is the normalization of $k[[t]][\lambda]/(f)$. 
\end{proposition}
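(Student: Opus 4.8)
The statement will follow by making the Newton polygon $\Gamma$ explicit and feeding it into the non-degeneracy results already quoted; I organize the plan as (i) a purely combinatorial determination of the edges of $\Gamma$, and (ii) the translation, via \cite[Theorem 22]{Oka09} and completeness of $\sO$, into the asserted factorization. For the generic $a$ at hand we have $v(b_i)=\gamma_i$, so I would record the monomial $b_i\lambda^{r-i}$ of $f$ by the lattice point $(r-i,\gamma_i)$; then $\Gamma$ is the lower boundary of $C$, the Minkowski sum of $\R_{\ge 0}^2$ with the convex hull of the points $(r-i,\gamma_i)$, $0\le i\le r$. It is convenient to work first with the un-reflected points $(i,\gamma_i)$ and apply the reflection $i\mapsto r-i$ at the end, which preserves horizontal lengths and turns a slope $1/\mu$ into $-1/\mu$.

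For step (i), recall from the Intermezzo that $\gamma_i$ is the column index of the $i$-th box, so the value $l$ is attained for exactly $\mu_l$ consecutive indices $i$, with $\mu_1\ge\mu_2\ge\cdots\ge\mu_{n_1}$ and $\sum_l\mu_l=r$. Writing $S_l=\mu_1+\cdots+\mu_l$, I would show that every point $(i,\gamma_i)$ lies on or above the polyline through the points $(S_l,l)$: on the segment from $(S_{l-1},l-1)$ to $(S_l,l)$ the interpolated height at abscissa $i$ is $(l-1)+(i-S_{l-1})/\mu_l\le l=\gamma_i$. This polyline is convex because its successive slopes $1/\mu_l$ are nondecreasing, so it is exactly the lower hull, and its genuine vertices are the $(S_l,l)$ with $\mu_l>\mu_{l+1}$. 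Grouping the levels into maximal blocks on which $\mu_l$ equals a fixed value $\mu$, each block yields a single edge; after reflection it has slope $-1/\mu$, horizontal length $\mu\cdot\#\{l:\mu_l=\mu\}$, and primitive direction $(\mu,-1)$, so it contains exactly $\#\{l:\mu_l=\mu\}$ connected components of $e\setminus\Z^2$. Since $\sum_l\mu_l=r$ and the top level is $n_1=\gamma_r$, these edges exhaust the lower boundary of $C$ from $(0,n_1)$ to $(r,0)$, and the slopes $-1/\mu$ run over the distinct values among the $\mu_l$.

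For step (ii), Assumption \ref{generic on boud} is Oka's non-degeneracy, so $\hat Z(f)$ is smooth, equals the normalization of $Z(f)$, and meets each $D_e$ transversally in a set indexed by the connected components of $e\setminus\Z^2$. By step (i) the edge $e$ of slope $-1/\mu_e$ carries $\#\{i:\mu_i=\mu_e\}$ such components, so $Z(f)$ has $\sum_\mu\#\{i:\mu_i=\mu\}=n_1$ branches over the origin. Each component of $e\setminus\Z^2$ is a primitive segment, hence the corresponding branch has Newton polygon the single segment from $(0,1)$ to $(\mu_e,0)$; this forces the associated factor $f_i$ to be Eisenstein of degree $\mu_e$ (its roots all have $v(\lambda)=1/\mu_e$, so the constant term has valuation exactly $1$ and every lower coefficient valuation $\ge1$), and $A_i=k[[t]][\lambda]/(f_i)$ is then a totally ramified DVR. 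Because $\sO=k[[t]]$ is complete, hence Henselian, the $\lambda$-monic polynomial $f$ factors according to the branches, $f=\prod_{i=1}^{n_1}f_i$ in $k[[t,\lambda]]$, with exactly $\#\{i:\mu_i=\mu_e\}$ factors of degree $\mu_e$, and $\prod_iA_i$ is the normalization of $A$.

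It remains to compare constant terms: along $e$ a branch has leading behaviour $\lambda^{\mu_e}=\rho\,t+\cdots$, where $\rho$ is the root of the edge polynomial $f_e$ in the weight-zero coordinate $\lambda^{\mu_e}/t$ attached to that component, so the constant term of $f_i$ is $(-1)^{\mu_e}\rho\,t+O(t^2)$; Assumption \ref{generic on boud} makes the $\rho$ pairwise distinct, hence constant terms of equal-degree factors differ modulo $t^2$, as required. The main obstacle I anticipate is step (i): one must rule out spurious hull vertices and check that the lattice-segment count on each edge is precisely the multiplicity $\#\{i:\mu_i=\mu_e\}$, which is exactly where the monotonicity of the conjugate partition and $\sum_l\mu_l=r$ are used. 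Granting this, everything else is a formal consequence of completeness of $\sO$ and the cited non-degeneracy theorem.
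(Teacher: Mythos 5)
Your proposal is correct and follows essentially the same route as the paper: the paper's argument is precisely the Newton-polygon/toric-modification discussion preceding the proposition, invoking Assumption \ref{generic on boud} and \cite[Theorem 22]{Oka09}, with the proposition stated as its conclusion. Your step (i) simply makes explicit the combinatorial verification (that the edges of $\Gamma$ have slopes $-1/\mu$ with lattice-segment count $\#\{l:\mu_l=\mu\}$, using $v(b_i)=\gamma_i$ and the monotonicity of the conjugate partition) that the paper compresses into ``one can check,'' and your treatment of the Eisenstein property and of the constant terms via the distinct roots of the edge polynomials matches the paper's intent.
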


This is a stronger conclusion than that in \cite[Chapter 2, Proposition 6.4]{Neu99}, because of our Assumption \ref{generic on boud} which is an extra condition on the separability of Newton polygons. Now we can make the genericity condition precise. 
\bdefinition\label{def:genericity condition}
We define a subvariety  $\sU$ of $U_{\text{val}}$ such that for any $a\in\sU$, satisfying the following properties:
\begin{enumerate}
	\item[(a)] The spectral curve $X_{a}$ is integral and has a unique singularity $\pi^{-1}(x)$.
	\item[(b)] The Newton polygon of the characteristic polynomial associated to $a$ satisfies Assumption \ref{generic on boud}.
\end{enumerate}
By Lemma \ref{generic integrality} in the Appendix, the first condition is an open condition, and the separability condition in Assumption \ref{generic on boud} is also an open condition, and thus $\sU$ is an open subvariety.
\edefinition

\begin{corollary}\label{ramification} 
	For all $a\in \sU$, there are $n_{1}$ (the length of conjugate partition) points in $\tilde{X}_{a}$ over $x\in X$ with ramification degrees as $(\mu_{1},\mu_{2},\ldots,\mu_{n_{1}})$ (the conjugate partition of $(n_{1},\ldots,n_{\sigma})$). The geometric genus of $\tilde{X}_a$ is $$ g(\tilde{X}_a)=r^2(g-1)+1+\dim(G/P_x).$$
\end{corollary}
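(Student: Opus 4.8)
\emph{Proof proposal.}

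The first two assertions follow directly from Proposition~\ref{decomposition of char}. That proposition identifies the formal completion of the local ring of $\tilde{X}_a$ above $x$ with $\tilde{A}=\prod_{i=1}^{n_1}A_i$, where $A_i=k[[t]][\lambda]/(f_i)$ is a DVR since $f_i$ is Eisenstein of degree $\mu_i$. As $\tilde{A}$ has exactly $n_1$ maximal ideals, there are $n_1$ points of $\tilde{X}_a$ over $x$; and as $A_i$ is totally ramified of degree $\mu_i$ over $k[[t]]$, the ramification index at the $i$-th point equals $\mu_i$. This gives the ramification profile $(\mu_1,\dots,\mu_{n_1})$.

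For the genus I would work through the delta invariant of the (unique) singular point of $X_a$. Writing $N:\tilde{X}_a\to X_a$ for the normalization, the exact sequence $0\to\sO_{X_a}\to N_*\sO_{\tilde{X}_a}\to\sT\to 0$ with $\sT$ a torsion sheaf supported over $x$ gives $P_g(\tilde{X}_a)=P_a(X_a)-\delta_x$, where $\delta_x=\dim_k(\tilde{A}/A)$ and $A=k[[t]][\lambda]/(f)$. Since $P_a(X_a)=1+r^2(g-1)+\tfrac{r(r-1)}{2}\deg D$ is already known (here $\deg D=1$), the genus formula reduces to computing $\delta_x$.

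To evaluate $\delta_x$ I would use that every branch is smooth, so that the delta invariant is the sum of the pairwise intersection multiplicities of the branches:
\[
\delta_x=\sum_{1\le i<j\le n_1}(f_i\cdot f_j)_x=\sum_{i<j}v\big(\Res_\lambda(f_i,f_j)\big).
\]
Here $\Res_\lambda(f_i,f_j)=\prod_{\rho,\sigma}(\rho-\sigma)$ ranges over the roots $\rho$ of $f_i$ and $\sigma$ of $f_j$ in $\overline{k((t))}$, each root of $f_i$ having valuation $1/\mu_i$. If $\mu_i\ne\mu_j$ the two valuations differ, so $v(\rho-\sigma)=\min(1/\mu_i,1/\mu_j)$ for all pairs and $v(\Res)=\mu_i\mu_j\min(1/\mu_i,1/\mu_j)=\min(\mu_i,\mu_j)$. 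If $\mu_i=\mu_j=\mu$, the leading coefficients of the roots are precisely the distinct nonzero roots of the relevant edge polynomial of $\Gamma$ (Assumption~\ref{generic on boud}), so $v(\rho-\sigma)=1/\mu$ for all pairs and again $v(\Res)=\min(\mu_i,\mu_j)$. Hence $\delta_x=\sum_{i<j}\min(\mu_i,\mu_j)=\sum_j(j-1)\mu_j$, which by (\ref{comb fact 1}) equals $\tfrac12\sum_i n_i(n_i-1)=\tfrac12(\sum_i n_i^2-r)$. Substituting into $P_g=P_a-\delta_x$ and using $\dim(G/P_x)=\tfrac12(r^2-\sum_i n_i^2)$ produces the claimed formula.

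The crux is the case $\mu_i=\mu_j$: without extra input the roots of two equally ramified branches might agree to higher order, making $v(\rho-\sigma)>1/\mu$ and destroying the clean value $\min(\mu_i,\mu_j)$. This is exactly what the non-degeneracy hypothesis (Assumption~\ref{generic on boud}), equivalently the clause in Proposition~\ref{decomposition of char} that branches of equal degree have constant terms differing modulo $t^2$, is designed to prevent. Once that is secured, the remaining ingredients — the normalization sequence and the partition identity (\ref{comb fact 1}) — are routine bookkeeping.
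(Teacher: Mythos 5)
Your proposal is correct and follows the same route the paper intends: the branch count and ramification indices are read off from the Eisenstein factorization of Proposition~\ref{decomposition of char}, and the genus is obtained as $P_a(X_a)-\delta_x$. The paper's own proof compresses the genus computation into the single assertion that it ``follows from the ramification degrees''; your evaluation of $\delta_x$ via pairwise intersection multiplicities of the smooth branches, with the non-degeneracy clause of Proposition~\ref{decomposition of char} handling the equal-$\mu$ case, is exactly the bookkeeping that assertion leaves implicit, and it checks out against (\ref{comb fact 1}).
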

\begin{proof}
	By Proposition \ref{decomposition of char}, the formal local ring of $\tilde{X}_{a}$ can be written as $\prod_{i=1}^{n_{1}}k[[t]][\lambda]/(f_i)$. The ramification degrees are due to the degrees of Eisenstein polynomials defining strict transforms of local branches. The geometric genus $g(\tilde{X}_a)$ then follows from the ramification degree formula in \cite[Chapter 5, Example 3.9.2]{Hart77}.\qed
\end{proof}

\subsection{Parabolic BNR correspondence}

This subsection is devoted to building up the parabolic BNR correspondence.

\begin{theorem}[Parabolic BNR Correspondence for $\GL_{r}$]\label{parabolic BNR}For any $a\in \sU$, the direct image $\tilde{\pi}_*$ of $\tilde{\pi}:\tilde{X}_a\to X$ induces a functorial isomorphism: 
	$$\begin{array}{rccc}\tilde{\pi}_*:&\{\text{degree } \delta \text{ line bundles over } \tilde{X}_a\}&\to& \left\{\begin{array}{c}\text{Strongly parabolic Higgs bundle } (\mathcal{E},\theta)\\
			\text{ with }\\ h_{P}((\mathcal{E},\theta))=a,\ \deg(E)=d 
		\end{array} \right\}\\
		&&&\\
		& L&\mapsto &\begin{array}{c}
			\tilde\pi_*L\text{ with the Young diagram filtration}\\\text{ as in Proposition-Definition} \; \ref{propdef:young filtration}
		\end{array}
	\end{array}$$
	where $\delta=(r^2-r)(g-1)+\dim(G/P_x)+d$. In particular, the fiber $h_{P}^{-1}(a)$ is isomorphic to $\Pic^{\delta}(\tilde{X}_{a})$.
\end{theorem} 

\begin{proof}
	Assume $L$ is a line bundle on $\tilde{X}_a$. Then the direct image $\tilde\pi_* L$ has a $\tilde\pi_*\sO_{\tilde X_a}$-module structure which makes $\tilde\pi_*L$ an $\omega_X(x)$-valued Higgs bundle as in the classical BNR correspondence. Now by Proposition-Definition \ref{propdef:young filtration}, $\tilde\pi_* L$ has a canonical Young diagram filtration with parabolic type $P_x$. To be more precise, we put $\tilde{\pi}^{-1}(x)=\{y_{1},\ldots,y_{n_{1}}\}$ with ramification index $\mu_{1},\ldots, \mu_{n_{1}}$ respectively. Then we have the filtration on 
	\[
	L\supset L(-\sum_{i=1,\ldots,n_{1}}a_{i1}y_{i})\supset L(-\sum_{i=1,\ldots,n_{1}}a_{i2}y_{i})\supset\cdots L(-\sum_{i=1,\ldots,n_{1}}a_{in_{1}}y_{i})
	\]
	with $a_{ij}$ as in Proposition-Definition \ref{propdef:young filtration}.

	In the next, we show that given a strongly parabolic Higgs bundle $(\sE,\theta)=(E,\theta,P_x)$, there is a unique line bundle $L$ on $\tilde{X}_{a}$ such that $\tilde{\pi}_*L\cong E$. To prove this, we show that the parabolic data endow $E$ with a $\tilde\pi_*\sO_{\tilde{X}_a}$-module structure. 
	
	Recall that $\tilde{\pi}$ decomposes as the composition of two finite morphisms $N:\tilde{X}_a\to X_a$ and $\pi:X_a\to X$. By the classical BNR correspondence, we can realize the underlying Higgs bundle $(E,\theta)$ of $(\sE,\theta)$ as the direct image of a torsion free rank one sheaf $V$ over the singular spectral curve $X_a$. The parabolic structure $P_x$ endows $V$ with a filtration on $V_{\pi^{-1}(x)}$.
	
	We show that $(V,P_x,\theta)$ induces a $N_*\sO_{\tilde{X}_a}$-module structure on $V$. Since the normalization map is finite and isomorphic over $X_a-\pi^{-1}(x)$, we reduce by fpqc descent to consider the problem in a complete neighborhood of $x$. That is to say, we now show that the $\hat{\sO}_{X_a,\pi^{-1}x}$-module $\hat{V}_{X_a,\pi^{-1}x}$ has a principal $\hat{N_*\sO_{\tilde{X}_a}|_{\pi^{-1}x}}$-structure. Since $a\in \sU$, by Proposition \ref{decomposition of char}, $(\hat{V}_{X_a,\pi^{-1}x}, P_x, \theta)$ verifies conditions (a), (b), (c) in Definition \ref{local data}, and thus it is a distinguished local strongly parabolic Higgs bundle. We can apply our Proposition \ref{decomposition we want} and give the principal $\hat{N_*\sO_{\tilde{X}_a}|_{\pi^{-1}x}}$-module structure on $\hat{V}_{X_a,\pi^{-1}x}$. Then $V$ becomes a principal $N_*\sO_{\tilde{X}_a}$-module and must be a direct image of a line bundle $L$.

	The degree $\delta$ can be calculated using the Riemann-Roch theorem, as $g(\tilde{X}_a)=r^2(g-1)+1+\dim(G/P_x)$ in Corollary \ref{ramification}.\qed
\end{proof}
\bremark From Proposition-Definition \ref{propdef:young filtration}, the tame singularity of the spectral curve guarantees that there is only one parabolic structure of fixed type which is compatible with the Higgs field, even if the Higgs field is nilpotent at the points of $D$.
\eremark

\bremark
Scheinost and Schottenloher \cite{SS95} proved a similar result over $\mathbb{C}$ by uniquely extending the eigen line bundle on $X_{a}-\pi^{-1}(D)$ to $\tilde{X}_{a}$. This extension is announced there. We use a different strategy here which is similar to that in \cite{BNR} to prove the correspondence.
\eremark

\subsection{Regularity of (Parabolic) Higgs Fields}\label{subsec:regularity}

In this subsection, we want to explain the importance of the regularity of (parabolic) Higgs fields and its interaction with the geometry of spectral curves.

Let us start without parabolic data and we denote $\Higgs$ as the moduli stack of Higgs bundles. Donagi and Gaitsgory\cite{DG02} and Ngo \cite{Ngo10} prove that there exists an open substack $\textbf{Higgs}^{\text{reg}}$ parametrizing Higgs bundles $(E,\theta)$ with $\theta$ regular everywhere. Moreover, they show that there exists an open subvariety $W$ of the Hitchin base $\sH$ such that $\textbf{Higgs}^{\text{reg}}$ coincides with $\Higgs$ over $W$ and for any $a\in W$, the spectral curve is smooth. This illustrates that the geometry of generic fibers is closely related with the regularity of Higgs fields. 

Now let us return to the parabolic case.
Let $X_a\xrightarrow{\pi} X$ be an integral spectral curve with characteristic polynomial $a\in \sH_P$ and $L$ be a torsion free sheaf of generic rank $1$ on $X_a$ such that $\pi_{*}L$ is an element in the fiber $h_{P}^{-1}(a)$.

We first consider the full flag case.
Locally near $x$, $\pi_*\sO_{X_a}$ can be written  as $\frac{\sO[\lambda]}{f(\lambda)}$ with $f(\lambda)=\lambda^n+a_1\lambda^{n-1}+\cdots
+a_n$ and $v(a_i)=1$, where $v$ is the valuation at $x$ as in Subsection \ref{Intermezzo}. Thus $X_a$ is smooth and the $\pi_*\sO_{X_a}$-module structure on $\pi_*L$ induces a Higgs field $\theta:L\to L\otimes\omega_X(x)$. More concretely, we can choose the trivialization such that $\pi_*L$ and $\pi_*L\otimes \omega_{X}(x)$ are all isomorphic to $\frac{\sO[\lambda]}{f(\lambda)}$ with $\theta$ acting on $\frac{\sO[\lambda]}{f(\lambda)}$ by left multiplication of $\lambda$. Then the matrix of $\lambda$ is a regular nilpotent matrix at $x$ and it preserves the filtration given by the ideal $(\lambda)$. 

However, if the parabolic data $P_x$ is not Borel, $\theta_x$ lies in the nilpotent radical of $P_x$, which is never a regular nilpotent element. In this case, if we assume the characteristic polynomial $a$ lies in our open subset $\sU\subset \sH_P$, then the singularity of $X_a$ is concentrated at $x$ and the local equation of $\pi_*\sO_{X_a}$ is a product of Eisenstein polynomials via Proposition \ref{decomposition of char}. 
Thanks to Theorem \ref{parabolic BNR}, the parabolic structure on $\pi_{*}L$ gives it a principal $\tilde\pi_*\sO_{\tilde X_a}$-module structure and realizes $L$ as a line bundle on $\tilde X_a$.

Applying the decomposition in Proposition \ref{decomposition we want} to $\pi_{*}L=(\sE,\theta)$ in the formal neighbourhood of $x$ implies that $\theta|_x$ lies in the Levi subalgebra $\mathfrak{gl}_{n_1}\times\cdots\times \mathfrak{gl}_{n_{\sigma}}$
and in each factor $\mathfrak{gl}_{n_{i}}$, $\theta_i$ is a regular nilpotent element. To be more precise:

\begin{corollary}\label{integral conjugation}
	For $a\in \sU$ (see Definition \ref{def:genericity condition}) and any $(\sE,\theta)$ in the fiber $ h^{-1}(a)$, the Jordan blocks of $\theta\mod t$ are of size $(\mu_{1},\mu_{2},\ldots,\mu_{n_1})$.
\end{corollary}

Actually, they are the so-called Richardson elements. We refer the readers to \cite{Bau06} for more details.

\begin{remark}
	In \cite[Section 4, Corollary 1]{KL88}, Kazhdan and Lusztig showed that $\theta|_{\Spec(\sK)}$ is conjugate to an element in $GL(\sO)$ whose reduction modulo $t\sO$ is regular nilpotent when the base field is $\mathbb{C}$. Corollary \ref{integral conjugation} is about conjugation over $\sO$, and thus is different.
\end{remark}

\subsection{The $\SL_{r}$ Case}
If we replace $\GL_{r}$ by $\SL_{r}$, we also have coarse moduli spaces $\M_{P,\alpha}^{\circ}$, $\Higgs_{P,\alpha}^{\circ}$. And the parabolic Hitchin space is:
\[
\sH_{P}^{\circ}:=\prod_{j=2}^r\mathbf{H}^{0}\left(X,\omega_X^{\otimes j}\otimes\sO_X(\sum_{x\in D}(j-\gamma_{j}(x))\cdot x)
\right).
\]
We use ``$\circ$" to emphasize the tracelessness. We denote the corresponding strongly parabolic Hitchin map as $h_{P,\alpha}^{\circ}$. Considering the following commutative diagram:
\[
\begin{tikzcd}
	\Higgs^{\circ}_{P,\alpha}\arrow[d]\arrow[r,"{h^{\circ}_{P,\alpha}}"]& \mathcal{H}^{\circ}_{P}\arrow[d]\\
	\Higgs_{P,\alpha}\arrow[r,"{{h_{P,\alpha}}}"]& \mathcal{H}_{P}
\end{tikzcd},
\]
it follows that $h^{\circ}_{P,\alpha}$ is proper. Then by the parabolic BNR correspondence in Theorem \ref{parabolic BNR}, a generic fiber of $h^{\circ}_{P,\alpha}$ is the Prym variety of $\Pic(\tilde{X}_{a})$. Then by a dimension argument and properness, $h^{\circ}_{P,\alpha}$ is surjective.

\section{Generic fiber of weakly parabolic Hitchin maps}\label{section5}

In this section, we give a geometric description of generic fibers of the weakly parabolic Hitchin map (which is called the parabolic Hitchin map in \cite{LM10}). Logares and Martens \cite{LM10} have calculated the number of connected components of generic fibers. The reader is recommended to read \cite[Proposition 2.2]{LM10} if she or he wants to know the concrete number. We here treat it in a more geometric manner for a special case, i.e., the spectral curve is smooth and unramified over the marked point (without loss of generality, we assume that there is only one marked point). In this setting, the number of connected components is equal to the cardinality of the Coxeter group associated to the parabolic subalgebra.

In what follows, for convenience, we fix $a\in \sH$, such that $X_a$ is smooth and unramified over $x$. By Lemma \ref{generic integrality} in the Appendix, such $a\in \sH$ forms an open dense subvariety. To simplify the notation, we omit $\delta$ and use $\Pic(X_{a})$ to denote some connected component of its Picard variety. 


Choosing a marked point $q\in X_{a}$, we have an embedding $\tau:X_{a}\rightarrow \Pic(X_{a})$. We denote by $\ssP$ the pull back of a Poincar\'e line bundle over $\Pic(X_{a})\times X_{a}$. Now let us consider the following projection:
\begin{equation}\label{Univ Higgs from spectra} \Pic(X_a)\times X_a \xrightarrow{\id\times \pi} \Pic(X_a)\times X.
\end{equation}
We put $V:=(\id\times \pi)_{*}\ssP$, which is a rank $r$ vector bundle over $\Pic(X_{a})\times X$. Thus the $(\id\times\pi)_*\sO_{\Pic(X_a)\times X_a}$-module structure induces a Higgs field $$\theta_{\Pic}:V\to V\otimes_{\sO_X}\pi_X^*\omega_X(x).$$ And $(V,\theta_{\Pic})$ can be viewed as the universal family of Higgs bundles on $X$ with characteristic polynomial $a$. For simplicity, we use $V|_{x}$ (resp. $\theta_{\Pic}|_{x}$) to denote the restriction of $V$ to $\Pic(X_{a})\times \{x\}$. For our later purpose, we denote by $\ssA ut(V|_x)$ the group scheme of local automorphisms of vector bundle $V|_x$.

\begin{proposition}\label{Torus over Pic}
	We have a smooth commutative group subscheme $\gT$ of $\ssA ut(V|_x)$ over $\Pic(X_{a})$ acting on $V|_{x}$, such that for any point $z\in \Pic(X_a)$, the fiber $\gT(z)$ is the centralizer of $\theta_{\Pic}|_x$ at $z$.
\end{proposition}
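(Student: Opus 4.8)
The plan is to realize $\gT$ as the centralizer subgroup scheme of the endomorphism that $\theta_{Pic}$ induces on $V|_x$, and then to make this centralizer completely explicit using the hypothesis that $\pi$ is unramified over $x$. First I would analyze the restriction of the data to the fiber over $x$. Since $\pi\colon X_a\to X$ is finite flat of degree $r$ and unramified over $x$, the scheme-theoretic fiber $\pi^{-1}(x)$ is a reduced set of $r$ distinct points $y_1,\dots,y_r\in X_a$, and these are fixed, depending only on $a$ and not on $z\in Pic(X_a)$. Because $\id\times\pi$ is affine, pushforward commutes with the base change to $\{x\}\times Pic(X_a)$, yielding a canonical decomposition
$$
V|_x \;\cong\; \bigoplus_{i=1}^r \ssP_{y_i}, \qquad \ssP_{y_i} := \ssP|_{\{y_i\}\times Pic(X_a)},
$$
a direct sum of $r$ line bundles on $Pic(X_a)$.

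Next I would identify the induced endomorphism. Restricting $\theta_{Pic}$ to $\{x\}\times Pic(X_a)$ and trivializing the one-dimensional fiber $\omega_X(x)|_x$ (for instance via the residue) produces an $\sO_{Pic(X_a)}$-linear endomorphism $\Theta$ of $V|_x$. Since $\theta_{Pic}$ is the action of the tautological section $\lambda$ arising from the $(\id\times\pi)_*\sO_{X_a}$-module structure, $\Theta$ respects the decomposition above and acts on the summand $\ssP_{y_i}$ as multiplication by the scalar $\lambda(y_i)$. The points $y_i$ being distinct forces the eigenvalues $\lambda(y_1),\dots,\lambda(y_r)$ to be pairwise distinct, so fiberwise $\Theta_z$ is regular semisimple and its centralizer in $\GL(V_z|_x)$ is a maximal torus. (Note the eigenline decomposition is itself independent of the trivialization, so the centralizer is insensitive to that choice.)

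I would then define $\gT$ to be the subgroup scheme of $\operatorname{\underline{Aut}}_{Pic(X_a)}(V|_x)$ preserving the decomposition $\bigoplus_i\ssP_{y_i}$, equivalently the centralizer of $\Theta$:
$$
\gT \;=\; \prod_{i=1}^r \operatorname{\underline{Aut}}_{Pic(X_a)}(\ssP_{y_i}) \;\cong\; (\mathbb{G}_m)^r\times Pic(X_a),
$$
using that the automorphism group scheme of a line bundle over a base is canonically $\mathbb{G}_m$. This is manifestly a smooth, flat group scheme over $Pic(X_a)$. Finally, for a point $z$, an automorphism of $V_z|_x$ commutes with $\Theta_z$ if and only if it preserves each eigenline $\ssP_{y_i}|_z$ — the standard description of the centralizer of an operator with distinct eigenvalues — so $\gT(z)$ is exactly the centralizer $T$ of $\theta|_x$, as claimed.

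The only genuine subtlety, and the step I would be most careful with, is the commutation of pushforward with base change that delivers the global decomposition of $V|_x$; it rests on $\id\times\pi$ being finite (affine) and on $\pi^{-1}(x)$ being reduced, both of which hold under the standing hypotheses that $X_a$ is smooth and $\pi_a$ is unramified over $x$. Everything else is formal once this eigenline decomposition is in hand.
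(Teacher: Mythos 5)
Your proposal is correct and follows essentially the same route as the paper: restrict $\theta_{Pic}$ to $\{x\}\times Pic(X_a)$, observe it is regular semisimple because $\pi$ is unramified over $x$, and take $\gT$ to be its centralizer in the automorphism group scheme of $V|_x$. The explicit eigenline decomposition $V|_x\cong\bigoplus_i\ssP|_{y_i}$ that you use to justify this is also invoked by the paper (as equation (5.2), just after Theorem 5.5), so you have merely front-loaded a detail the authors record slightly later.
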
  

\begin{proof}
	Restricting $\theta_{\Pic}$ to $\Pic(X_{a})\times \{x\}$ gives $$\theta_{\Pic}|_x:V|_x\to V\otimes_{\sO_X}\pi_x^*\omega_X(x)|_x\cong V|_x.$$
	Since $X_{a}$ is smooth and unramified over $x$, $\theta_{\Pic}|_x$ is regular semisimple everywhere on $\Pic(X_a)$.
	
	Then we consider the centralizer of $\theta_{\Pic}|_x:V|_x\to V|_x$ in $\ssA ut(V|_x)$ over $\Pic(X_{a})\times \{x\}$. This gives us a closed group scheme $\gT$ over $\Pic(X_{a})$, and it is commutative since fiber-wise it is a centralizer of a regular semisimple element in $\mathfrak{g}$. In fact, fiberwise it is a maximal torus. To show the smoothness, we use the trick from Humphreys \cite[Section 10.6]{Hum75} since we deal with $\GL$-case. Notice that $\ssA ut(V|_x)$ is locally a principal open subvariety of $\ssE nd(V|_{x})$, i.e., the bundle of endomorphisms of $V|_{x}$. The centralizer of $\theta$ in $\ssE nd(V|_{x})$ is a vector bundle of rank $r$ (centralizer in Lie algebra is a linear subspace). Thus as an open subvariety of this vector bundle, $\gT$ is smooth.
	
	Besides, there is a natural action of $\gT$ on $V|_{x}$ since it is a subgroup scheme of $\ssA ut(V|_x)$.\qed
\end{proof}

In the following we construct a flag bundle $\mathfrak{Fl}$ on $\Pic(X_a)$ that classifies all the possible filtrations at $x$. Fiber-wise this is isomorphic to $G/P_x$. We show that $\gT$ acts on it naturally.

\begin{definition}
	We denote by $\text{Fr}(V|_x)$ the frame bundle given by the vector bundle $V|_x$. We define the (partial) flag bundle $\mathfrak{Fl}$ over $\Pic(X_{a})$ as the associated bundle 
	$\text{Fr}(V|_x)\times_G G/P_x$. Here $P_x$ is the parabolic subgroup corresponding to the parabolic structure at $x$. 
\end{definition}
By definition, $\mathfrak{Fl}$ parametrizes all the vector bundle filtrations with type given by $P_x$ on $V|_x$.

\btheorem\label{main5} For a generic $a\in \sH$, we have $(h^{W}_P)^{-1}(a)\cong\mathfrak{Fl}^\gT$. 
\etheorem

\bproof	
Since $\gT$ is a smooth commutative subgroup scheme of $\ssA ut(V|_x)$  (i.e., the group scheme of local automorphisms of vector bundle $V|_x$), $\gT$ acts naturally on the frame bundle $\text{Fr}(V|_x)$ which induces an action of $\gT$ on the associated bundle $\mathfrak{Fl}$.

Fiber-wise, fixed points are those $\{P_{i}\}\subset G/P_{x}$, such that $P_{i}\supset Z_{G}(\theta_{\Pic}|_{x})=T$. Since then $\theta_{\Pic}|_{x}\in\mathfrak{p}_{i}$, filtration determined by $P_{i}$ is compatible with $\theta_{\Pic}|_{x}$.

Conversely, $(\sE,\theta)$ lies in $(h^{W}_{P})^{-1}(a)$, meaning that $E$ is a line bundle over $X_{a}$, and has a filtration at $x$ compatible with $\theta_{\Pic}|_{x}$. A sub-bundle of parabolic sub-groups, $P'\subset \ssA ut(V|_{x})$, determines a filtration of $V|_x$. This filtration is compatible with $\theta_{\Pic}|_{x}$ if and only if $\theta_{\Pic}|_{x}\in\mathfrak{p}'$. Since $\theta_{\Pic}|_{x}$ is regular semisimple, its centralizer is a maximal torus contained in $P'$, i.e., $P'\supset \mathfrak{T}$. Thus it is a fixed point of $\mathfrak{T}$-action on $\mathfrak{Fl}$.\qed
\eproof

\begin{remark}
	The intuition of this theorem is as follows:
	filtrations coming from a parabolic structure should be compatible with the Higgs field at $x$, and thus they correspond to the fixed points of $\gT$ action on $\mathfrak{Fl}$.
\end{remark}

Denote by $W_x$ the Coxeter subgroup corresponding to the parabolic subgroup $P_x$, we have 

\blemma\label{compatible with theta} $\mathfrak{T}$ acts on $\mathfrak{Fl}$ over $\Pic(X_a)$, and the fixed points $\mathfrak{Fl}^{\mathfrak{T}}$ is a $W_x$ torsor on $\Pic(X_a)$.
\elemma
\bproof	
We know that $\gT\subset\sA ut(V|_{x})$ as a sub-group scheme, then $\gT$ acts on $\mathfrak{Fl}$.
Since fiber-wise we know that invariant points of $G/P_x$ under the action of $T\subset P_x$ is in bijection to $W_{x}$, we finish the proof.\qed
\eproof

Since here $G=\GL_{r}$, we can give a more explicit description. First, we denote $\pi^{-1}(x)$ by $\{y_{1},\ldots,y_{r}\}\subset X_{a}$. Then we restrict the universal line bundle $\ssP$ to each $\Pic(X_a)\times {y_i}$, and denote it by $\ssP|_{y_i}$. One has 
\beq\label{vxdecomp} V|_x\cong \oplus_{i=1}^r \ssP|_{y_i}\eeq since $\pi^{-1}(x)$ are $r$-distinct reduced points. 

Indeed, factors in the decomposition (\ref{vxdecomp}) are eigenspaces of $\theta_{\Pic}|_x$. So under this decomposition, $\theta_{\Pic}|_x$ is a direct sum of $\theta_{y_i}:\ssP_{y_i}\to\ssP_{y_i}$ and $\gT$ preserves the decomposition. To conclude:
\begin{corollary}
	The connected components $\pi_{0}((h^{W}_P)^{-1}(a))$ are in bijection to the Coxeter group $W_{x}$ associated with $P_x$.
\end{corollary}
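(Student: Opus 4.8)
The plan is to combine Theorem \ref{main5} with the explicit eigenline decomposition (\ref{vxdecomp}) to show that the finite covering $\mathfrak{Fl}^{\gT}\to Pic(X_a)$ is \emph{trivial}, i.e.\ a disjoint union of $|W_x|$ copies of $Pic(X_a)$; since each copy is connected the corollary is then immediate. First I would invoke Theorem \ref{main5} to replace $(h^{W}_P)^{-1}(a)$ by $\mathfrak{Fl}^{\gT}$, and record from Lemma \ref{compatible with theta} that this is a $W_x$-torsor over $Pic(X_a)$. The real content beyond these two statements is that the torsor \emph{splits}, which is exactly where the globality of (\ref{vxdecomp}) enters.

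Concretely, since $\pi$ is unramified over $x$, the fiber $\pi^{-1}(x)=\{y_1,\dots,y_r\}$ consists of $r$ distinct, separately labelled points, all defined over $Pic(X_a)$. Hence the line subbundles $\ssP|_{y_i}\subset V|_x$ and the decomposition $V|_x\cong\bigoplus_{i=1}^r\ssP|_{y_i}$ hold over all of $Pic(X_a)$, with the $\ssP|_{y_i}$ being precisely the eigen-subbundles of $\theta_{Pic}|_x$. Because $\theta_{Pic}|_x$ is regular semi-simple everywhere, the group scheme $\gT$ acts on the summands through pairwise distinct characters, so every $\gT$-invariant subspace of a fiber is a coordinate subspace, i.e.\ a sum of a subset of the $\ssP|_{y_i}$. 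A $\gT$-fixed flag of type $P_x$ is therefore the same datum as an ordered partition $\beta$ of $\{1,\dots,r\}$ into blocks of sizes $(m^1,\dots,m^\sigma)$, and there are exactly $|W_x|=r!/\prod_j m^j!$ of these.

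The next step is to promote each combinatorial choice to a genuine section. Fixing such an ordered partition $\beta$, sending $z\in Pic(X_a)$ to the flag of $V|_x$ assembled from the subbundles $\ssP|_{y_i}$ grouped according to $\beta$ defines a morphism $s_\beta:Pic(X_a)\to\mathfrak{Fl}^{\gT}$. Distinct partitions give distinct coordinate flags at every point, so the $s_\beta$ have pairwise disjoint images, and by the previous paragraph these images exhaust $\mathfrak{Fl}^{\gT}$. Thus $\mathfrak{Fl}^{\gT}\cong\bigsqcup_\beta Pic(X_a)$, a disjoint union indexed by $W_x$. Since $Pic(X_a)$ is a connected component of the Picard variety, these images are precisely the connected components, giving the desired bijection $\pi_0((h^{W}_P)^{-1}(a))\cong W_x$.

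The main obstacle is the triviality of the torsor: a priori a $W_x$-torsor over the connected base $Pic(X_a)$ could carry nontrivial monodromy and hence have strictly fewer than $|W_x|$ components. What removes this difficulty is the globality of the eigenline decomposition (\ref{vxdecomp}): the $r$ distinct points $y_1,\dots,y_r$ come with a fixed labelling over all of $Pic(X_a)$, which kills the monodromy and is exactly what makes each section $s_\beta$ globally well defined. Everything else is a routine reinterpretation of the statements already established in Theorem \ref{main5} and Lemma \ref{compatible with theta}.
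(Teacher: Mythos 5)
Your argument is correct and is essentially the paper's own: the paper likewise deduces the corollary from Theorem \ref{main5} and Lemma \ref{compatible with theta} by observing that the decomposition (\ref{vxdecomp}) into the eigen-subbundles $\ssP|_{y_i}$ holds globally over $Pic(X_a)$ (the $y_i$ being distinct labelled points), so that the $\gT$-fixed flags are exactly the coordinate flags and the torsor splits. You have merely written out explicitly the sections $s_\beta$ that the paper leaves implicit in its ``to conclude''.
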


\section{Global Nilpotent Cone of Strongly/Weakly Parabolic Hitchin Maps}\label{sec 6}
In this section, we study global properties of strongly/weakly parabolic Hitchin maps, i.e., flatness and surjectivity.
\begin{definition}
	We call $h_{P}^{-1}(0)$ (resp. $(h^{W}_{P})^{-1}(0)$) the strongly parabolic global nilpotent cone (resp. the weakly parabolic global nilpotent cone) denoted by $\sN il_{P}$(resp. $\sN il_{P}^{W}$).
\end{definition}

By Lemma \ref{dim formula} and formulas (\ref{dimofweakparabolichiggs}) and (\ref{dimofweakparabolichitchinbase}), we have  
\begin{align}&\label{dimestimate0.0} \dim(\text{fiber of }h_{P}) \geq \dim(\M_P)\\
	&\label{dimestimate0.1} \dim(\text{fiber of }h_P^W) \geq r^2(g-1)+1+\frac{r(r-1)\deg(D)}{2}.
\end{align}

\subsection{$\mathbb{G}_m$-actions on $\Higgs^W_P$ and $\Higgs_P$}

There is a natural $\mathbb{G}_m$-action on  the moduli stack of strongly/weakly parabolic Higgs bundles given pointwisely by $(\sE,\theta)\mapsto (\sE,t\theta), t\in \mathbb{G}_{m}$. It preserves stability and leaves Hilbert polynomials invariant. Thus it can be defined on the (semi-)stable locus and indeed the corresponding coarse moduli spaces, i.e., $\Higgs_{P}$ and $\Higgs^{W}_{P}$. This action was first studied by Hitchin \cite{Hit87}. It contains a lot of information of moduli spaces and Hitchin maps.

There is also a natural $\mathbb{G}_m$-action on $\sH$ and $\sH_{P}$: \[(a_1,a_2,\cdots,a_r)\mapsto (ta_1,t^2a_2,\cdots,t^ra_r),\]
and $h_{P}$, $h^{W}_{P}$ are equivariant under the $\mathbb{G}_m$-action. This can be used to show the flatness of Hitchin maps as in \cite{Gin01} if one has the dimension estimate of the global nilpotent cones. Over complex numbers, one can use the symplectic geometry as in \cite{Gin01} to show that the nilpotent cone is a Lagrangian substack of $T^\vee_{\text{Bun}_G}$ and then get the dimension estimate of the global nilpotent cone. But over a general algebraically closed field, especially in the positive characteristic case, one does not have the symplectic geometry. Instead we will use deformation theory in the next subsection to estimate the dimension of the strongly/weakly parabolic global nilpotent cones, which will imply the flatness of the strongly/weakly parabolic Hitchin maps.

\subsection{Dimensions of the Strongly/Weakly Parabolic Global Nilpotent Cones}

The study of infinitesimal deformations of strongly/weakly parabolic Higgs bundles was done in \cite{Yo95}, \cite{BR94} and \cite{Botta95}.  They discussed the deformation of a Higgs bundle along the whole moduli spaces and used this to give a description of the canonical symplectic form. In this subsection, we consider the deformation of a Higgs bundle along the reduced part of the irreducible components of the strongly/weakly parabolic global nilpotent cone and give the dimension estimates of the corresponding components.

\subsubsection{Strongly Parabolic Global Nilpotent Cone}

Recall that in \cite[Theorem 4.6]{Yo93C},\cite[Remark 5.1]{Yo95}, since we always assume the weight function is generic, see Assumption \ref{assm 0}, $\Higgs_P$ is a geometric quotient by an algebraic group $\PGL(V)$ of some $\PGL(V)$-scheme $\sQ$. Moreover, one has a universal family of stable strongly parabolic Higgs bundles with a surjection as in \cite[Remark 5.1]{Yo95} $$(\tilde{\sE},\tilde{\theta}) \text{ with a surjection }V\otimes_k \sO_{X_\sQ}\twoheadrightarrow \tilde{\sE}.$$ 

We denote the quotient map by $q:\sQ \to \Higgs_P$. Restricting the universal family $(V\otimes_k \sO_{X_{\sQ}}\twoheadrightarrow\tilde{\sE},\tilde{\theta})$ to $q^{-1}(\sN il_{P})$, we get:
$$\mathbb{U}_{\sN il_P}:=(V\otimes_k \sO_{X_{q^{-1}(\sN il_{P})}}\twoheadrightarrow\tilde{\sE},\tilde{\theta}).$$ 
For any scheme $S$ and flat family $(V\otimes_k \sO_S\twoheadrightarrow \sE_{S},\theta_S)$ of strongly parabolic Higgs bundles with $h_{P}((\sE_{S},\theta_{S}))=0$ on $S$, there is a map $\phi:S\to q^{-1}(\sN il_{P})$ such that $$(\id_X\times\phi)^*\mathbb{U}_{\sN il_P}\cong (V\otimes_k \sO_S\twoheadrightarrow \sE_{S},\theta_S).$$

To determine the dimension of $\sN il_P$, it is sufficient to calculate the dimension of each irreducible component with reduced structure. Restricting to any generic point $\eta$ of $\sN il_P$, $\theta_\eta:=\tilde{\theta}|_{q^{-1}(\eta)^\text{red}}$ gives a filtration $\{\Ker(\theta_\eta^i)\}$ of vector bundles of $\tilde{E}|_{q^{-1}(\eta)^\text{red}}$ (i.e., the graded terms are all vector bundles), because $X\times \eta$ is a curve. We can spread this out. 

\blemma\label{irrW} There exists an irreducible open subset $W\subset\sN il^{\text{red}}_P$ with generic point $\eta$, such that $\theta_W:=\tilde{\theta}|_{q^{-1}(W)^\text{red}}$ gives a filtration $\Ker(\theta_W^i)$ of vector bundles of $E_{W}:=\tilde{E}|_{q^{-1}(W)^\text{red}}$ over $X\times W$. 
\elemma

We fix some notations for filtered bundle maps. Let $E_1,E_2$ and $E$ be vector bundles on $X$ with decreasing filtrations by subbundles $K_i^\pt$, $i=1,2,\emptyset$ on each of them. We denote by \[\sH om^{\text{fil}}(E_1,E_2)(\text{resp. }\sE nd^{\text{fil}}(E))\] the coherent subsheaf of $\sH om(E_1,E_2)$ (resp. $\sE nd(E)$) consisting of those local homomorphisms preserving filtrations. And $\sH om^{\text{s-fil}}(E_1,E_2)$ (resp. $\sE nd^{\text{s-fil}}(E)$) consists of local homomorphisms $\phi$ such that $\phi(K^{j}_1|_{U})\subset K^{j+1}_2|_U$ (resp. $\phi(K^{j}|_{U})\subset K^{j+1}|_U$ ). 

Let us denote the decreasing filtration by:
\[K^\pt_W:E_{W}=K^0_W\supset K^1_W\supset\cdots\supset K^{r'}_W=0, \]
induced by $\{\Ker(\theta_W^i)\}$ on $E_{W}$. For $x\in D$, we also use $x$ to denote the closed immersion $\{x\}\times W\to X\times W$.

\blemma\label{finer flag} At each punctured point $x:\{x\}\times W\rightarrow X\times W$, $K^{\pt}_W|_{x}$ is a coarser flag than the parabolic structure $\sE_W|_{x}$, so \begin{small}\[SParHom(\sE_{W},\sE_{W}\otimes\omega_X(D))\cap Hom^{\text{s-fil}} (E_W,E_W\otimes\omega_X(D))
	=Hom^{\text{s-fil}} (E_W,E_W\otimes\omega_X(D)),
	\]
\end{small}
\noindent and $\theta_W\in Hom^{\text{s-fil}} (E_W,E_W\otimes\omega_X(D))$.
\elemma

\bproof $\theta_W$ is a strongly parabolic map then $F^{\sigma_{x}-j}(x)\subset K_W^{r'-j}$. In other words, $\sE_{W}|_{x}$ is a finer flag than $K^{\pt}_W|_{x}$. Besides, $\theta_W\in SParHom(\sE_{W},\sE_{W}\otimes\omega_X(D))\cap Hom^{\text{s-fil}} (E_W,E_W\otimes\omega_X(D))$ by definition.  \qed 
\eproof

Thus the nilpotent parabolic bundle $\sE_{W}$ has a filtration of vector bundles which does not depend on the surjection $V\otimes_k \sO_{q^{-1}(W)^{\text{red}}}\twoheadrightarrow E_{W}$. 


\btheorem\label{main 1} The space of infinitesimal deformations in $W$ of a nilpotent strongly parabolic Higgs bundle $(\sE,\theta)$, is canonically isomorphic to $\mathbb{H}^1(X,\sA^\pt)$. Here $\sA^\pt$ is the following complex of sheaves on $X$:
$$ 0\to \sP ar\sE nd(\sE)\cap\sE nd^{\text{fil}}(E)\xrightarrow{ad(\theta)}
(\sS\sP ar\sE nd(\sE)\cap \sE nd^{\text{s-fil}}(E) )\otimes\omega_X(D)\to 0,$$
which is isomorphic to
$$0\to \sP ar\sE nd(\sE)\cap \sE nd^{\text{fil}}(E)\xrightarrow{ad(\theta)}
\sE nd^{\text{s-fil}}(E) \otimes\omega_X(D)\to 0.$$
\etheorem

\bproof An infinitesimal deformation of a parabolic pair $(\sE,\theta)=u\in W$ is a flat family $(\boldsymbol{\sE},\boldsymbol{\theta})$ with $h_{P}((\boldsymbol{E},\boldsymbol{\theta}))=0$ parametrized by $\Spec(k[\epsilon]/\epsilon^2)$ together with a given isomorphism of $(\sE, \theta)$ with the specialization of $(\boldsymbol{\sE},\boldsymbol{\theta})$. By the local universal property of $\mathbb{U}_{\sN il_P}$, $(\boldsymbol{\sE},\boldsymbol{\theta})$ is the pull back of $(\tilde{\sE},\tilde{\theta})$ by a map $\phi:\Spec(k[\epsilon]/\epsilon^2)\to \sN il_P$. Moreover, if the deformation is inside $W$, then $\phi$ factors through $q^{-1}(W)^{\text{red}}$ and $(\boldsymbol{\sE},\boldsymbol{\theta})$ is a pull back of $(\sE_W,\theta_W)$.

Thus $$\boldsymbol{K}^{\pt}:=(id_X\times\phi)^*K^{\pt}_W$$ is a filtration on $\boldsymbol{E}$ such that $$\boldsymbol{\theta}\in SParHom(\boldsymbol{\sE},\boldsymbol{\sE}\otimes\omega_X(D))\cap Hom^{\text{s-fil}} (\boldsymbol{E},\boldsymbol{E}\otimes\omega_X(D)).$$ Since $K^{\pt}_W$ does not depend on the surjection $V\otimes_k \sO_{q^{-1}(W)^{red}}\twoheadrightarrow E_{W}$, $\boldsymbol{K}^{\pt}$ is uniquely determined by $(\boldsymbol{\sE},\boldsymbol{\theta})$.

Let us denote the projection by $\pi:X_{\epsilon}=X\times\Spec(k[\epsilon]/\epsilon^2)\to X$.
Tensoring $(\boldsymbol{\sE},\boldsymbol{K}^{\pt},\boldsymbol{\theta})$ with $$0\to (\epsilon)\to k[\epsilon]/\epsilon^2 \to k\to 0 ,$$ we have an extension of filtered strongly parabolic Higgs $\sO_{X_\epsilon}$-modules
\beq\label{ext2} 0\to  (\boldsymbol{\sE},\boldsymbol{K}^{\pt},\boldsymbol{\theta})(\epsilon)\to (\boldsymbol{\sE},\boldsymbol{K}^{\pt},\boldsymbol{\theta}) \to (\sE,K^{\pt},\theta)\to 0 .\eeq 
Pushing forward (\ref{ext2}) by $\pi$, we have an extension $$0\to  (\sE,K^{\pt},\theta)\to \pi_*(\boldsymbol{\sE},\boldsymbol{K}^{\pt},\boldsymbol{\theta}) \to (\sE,K^{\pt},\theta)\to 0$$ of locally free filtered strongly parabolic Higgs $\sO_X$-modules. The left inclusion will recover the $\sO_{X_\epsilon}$-module structure of $\pi_*(\boldsymbol{\sE},\boldsymbol{K}^{\pt},\boldsymbol{\theta})$. Thus $(\boldsymbol{\sE},\boldsymbol{K}^{\pt},\boldsymbol{\theta})$ is formally determined by an element in \[\Ext_{\text{fil}-par-Higgs-\sO_X}((\sE,K^{\pt},\theta),(\sE,K^{\pt},\theta)).\]
One can reinterpret  the extension class using \v Cech cohomology. Let $\sU=\{U_i\}_{i}$ be an affine finite covering of $X$, trivializing $E$ and all $K^j$. Then on each $U_i$, there is a splitting 
\[\phi_i:(\sE,K^{\pt})|_{U_i}\to(\boldsymbol{\sE},\boldsymbol{K}^{\pt})|_{U_i}\]
preserving the two compatible filtrations. The Higgs fields induce a filtered map \[\psi_i=\boldsymbol{\theta}\phi_i-\phi_i\theta:(\sE,K^{\pt})|_{U_i}\to(\sE,K^{\pt})|_{U_i}.\]

Thus the extension $(\boldsymbol{\sE},\boldsymbol{K}^{\pt})$ is given by a \v Cech 1-cocycle $(\phi_{ij}:=\phi_i-\phi_j)$ with values in $$\sP ar\sE nd(\sE)\cap \sE nd^{\text{fil}}(E)$$ and a \v Cech $0$-cochain $(\psi_i:=\boldsymbol{\theta}\phi_i-\phi_i\theta)$ with values in
$$ \sS \sP ar\sE nd(\sE)\otimes\omega_X(D)\cap \sE nd^{\text{s-fil}}(E)\otimes\omega_X(D).$$ 

One has \begin{align*}\delta(\phi_{ij})_{abc}&=\phi_{bc}-\phi_{ac}+\phi_{ab}\\
	&=\phi_b-\phi_c-\phi_a+\phi_c+\phi_a-\phi_b=0,
\end{align*} 
and \begin{align*}\delta(\psi_i)_{ab}&=\boldsymbol{\theta}\phi_{ab}-\phi_{ab}\theta\\
	&=\theta\phi_{ab}-\phi_{ab}\theta=ad(\theta)(\phi_{ab}).
\end{align*} 
It means that $((\phi_{ij}),(\psi_i))$ is a \v Cech 1-cocycle of the following complex of sheaves $\sA^\pt$ on $X$:
\[0\to \sP ar\sE nd(\sE)\cap\sE nd^{\text{fil}}(E)\xrightarrow{ad(\theta)} 
(\sS\sP ar\sE nd(\sE)\cap \sE nd^{\text{s-fil}}(E) )\otimes\omega_X(D)\to 0\]
If the extension is trivial, then $\phi_i=( 1, \phi_i')$ where $$\phi_i'\in  \sP ar\sE nd(\sE)\cap\sE nd^{\text{fil}}(E)$$ and $$\psi_i=\theta\phi'-\phi'\theta=ad(\theta)(\phi'_i).$$ Thus trivial extensions correspond to \v Cech 1-coboundaries of $\sA^\pt$. 

On the other hand, if we have a \v Cech 1-cocycle $((\phi_{ij}),(\psi_i))$, then use $\begin{bmatrix}I&\phi_{ij}\\ 0&I\end{bmatrix}$ to glue $$\{(\sE,K^\pt)|_{U_i}\oplus (\sE,K^\pt)|_{U_i}\}$$ with the local Higgs field $\begin{bmatrix}\theta&\psi_i\\0&\theta\end{bmatrix}$. One can check that the gluing condition of the local Higgs fields:
\[\begin{bmatrix}\theta&\psi_i\\0&\theta\end{bmatrix}\begin{bmatrix}I&\phi_{ij}\\ 0&I\end{bmatrix}=\begin{bmatrix}I&\phi_{ij}\\ 0&I\end{bmatrix}\begin{bmatrix}\theta&\psi_j\\0&\theta\end{bmatrix}\] is equivalent to the cocycle condition \[\delta(\psi_i)_{ab}=ad(\theta)(\phi_{ab}).\] 

If $((\phi_{ij}),(\psi_i))$ is a coboundary, i.e., \[((\phi_{ij}),(\psi_i))=\left( (\phi_i'-\phi_j'),(ad(\theta)(\phi_i')) \right).\] One can check :
\[\begin{bmatrix}-\phi'_i\\I\end{bmatrix}:(\sE,K^\pt)|_{U_i}\to(\sE,K^\pt)|_{U_i}\oplus (\sE,K^\pt)|_{U_i}\] can be glued to a global splitting of filtered Higgs bundles.\qed\eproof

The filtration $K^\pt$ of bundles on $E$ is equivalent as a $\sP$ reduction of $E$, where $\sP\subset \GL_r$ is the corresponding parabolic subgroup. We denote the principal $\sP$-bundle by $^\sP E$,  and let $U\subset \sP$ be the unipotent radical. We denote their Lie algebras by $\mathfrak{n}$, $\mathfrak{p}$. Thus $\sE nd^{\text{fil}}(E)\cong ad_{^\sP E}$, $\sE nd^{\text{s-fil}}(\sE)\cong ad_{^\sP E}(\mathfrak{n})$.

By Lemma \ref{finer flag}, we have $P_x\subset \sP$ for all $x\in D$. Denoting by $\mathfrak{p}_x$ the Lie algebra of $P_x$, we have 

\beq\label{n/u} 0\to \sP ar\sE nd(\sE)\cap \sE nd^{\text{fil}}(E)\to \sE nd^{\text{fil}}(E)\to \bigoplus_{x\in D}i_{x*}\text{ }\mathfrak{p}_{/\mathfrak{p}_x}\to 0.\eeq

According to (\ref{n/u}) we have \[0\to\sA^\pt\to\sA'^\pt\to \bigoplus_{x}i_{x*}\text{ }\mathfrak{p}_{/\mathfrak{p}_{x}}\to 0,\] where $\sA'^\bullet$ is
\[0\to \sE nd^{\text{fil}}(E)\xrightarrow{ad(\theta)}
\sE nd^{\text{s-fil}}(E) \otimes\omega_X(D)\to 0.\]

We also need the following Lemmas.
\blemma Let $\sP$ be a parabolic subgroup of $\GL_r$ and $U$ be its unipotent radical. We denote by $\mathfrak{p}$, $\g$ and $\mathfrak{n}$ their Lie algebras and $\sP$ acts on them by conjugation. One has $\mathfrak{n}^\vee\cong \mathfrak{g}/\mathfrak{p}$ as a $\sP$-linear representation.
\elemma
\bproof For $\g=\mathfrak{gl}_r$, the form $\beta:\g\times\g\to k\quad (A,B)\mapsto \tr(AB)$ is a non-degenerate $\GL_r$-equivariant bilinear form. Thus the isomorphism holds.\qed
\eproof
\blemma\label{adlemma} Let $E$ be a finite dimensional vector space over a field. $\theta:E\to E$ is a nilpotent endomorphism and $\mathfrak{p}$ is the parabolic algebra preserving the decreasing filtration given by $\{\Ker(\theta^{i})\}$. Let $\mathfrak{n}$ be the nilpotent radical. Then $ad(\theta):\mathfrak{p}\to \mathfrak{n}$ is surjective.
\elemma
\bproof Make an induction on the number of Levi factors of $\mathfrak{p}$.\qed\eproof

\bproposition\label{dimestimate1} We get the dimension estimate \begin{equation*}\dim_k(T_{u}W)=\dim_k(\mathbb{H}^1(X,\sA^\pt)) =\dim(\textbf{M}_P).
\end{equation*}
Thus any irreducible component of $\sN il_P$ has the same dimension as $\textbf{M}_P$. In particular, $\sN il_P$ is equi-dimensional.
\eproposition
\begin{proof}  One has \begin{align*}\chi(X,\sA'^\pt)&=\chi(\sE nd^{\text{fil}}(E))-\chi(\sE nd^{\text{s-fil}}(E) \otimes\omega_X(D))\\
		&=\chi(ad_{^\sP E}(\mathfrak{p}))-\chi(ad_{^\sP E}(\mathfrak{n}) \otimes\omega_X)-\deg(D)\cdot \dim_k(\mathfrak{n})\\
		&=\chi(ad_{^\sP E}(\mathfrak{p}))+\chi(ad_{^\sP E}(\mathfrak{g}/\mathfrak{p}))-\deg(D)\cdot \dim_k(\mathfrak{n})\\
		&= r^2(1-g)-\deg(D)\cdot \dim_k(\mathfrak{n}). 
	\end{align*}  
	Thus $
	\chi(X,\sA^\pt)= \chi(X,\sA'^\pt)-\sum_{x\in D} \dim_k(\mathfrak{p}_{/\mathfrak{p}_{x}})
	= r^2(1-g)-\sum_{x} \dim(G/P_x).$
	
	Elements in $\mathbb{H}^0(X,\sA^\pt)$ are those endomorphisms of $\sE$ commuting with $\theta$, then by the stability of $(\sE,\theta)$, we have $h^0(X,\sA^\pt)=1$. Base changing $\sA^\bullet$ to the generic point $\xi$ of $X$, we have $$\sA^\pt_\xi:\sE nd^{\text{fil}}(E_\xi)\xrightarrow{ad(\theta)_{\xi}} \sE nd^{\text{s-fil}}(E) \otimes\omega_X(D)_{\xi}\cong\sE nd^{\text{s-fil}}(E_{\xi}).$$ This map is surjective by Lemma \ref{adlemma}.  Thus $\tau^{\geq 1}\sA^{\pt}$ is supported on finitely many closed points of $X$ and $\mathbb{H}^2(X,\tau^{\geq 1}\sA^\pt)=0$. By
	$$\tau^{\leq 0}\sA^\pt\to\sA^\pt\to\tau^{\geq 1}\sA^\pt\xrightarrow{+1},$$ we have $h^2(X,\sA^\pt)=0$, and thus 
	$$\dim_k(T_{u}W)=\dim_k(\mathbb{H}^1(X,\sA^\pt))=1-\dim_k(\chi(X,\sA^\pt)) =\dim(\textbf{M}_P).$$\qed
\end{proof}

\btheorem\label{flat} If $\Higgs_P$ is smooth, then the strongly parabolic Hitchin map $h_P$ is flat and surjective.
\etheorem

\bproof By the dimension estimate of the strongly parabolic nilpotent cone, the remaining proof is similar as that in \cite[Corollary 1]{Gin01}. For any $s\in \sH_P-\{0\}$, $\bar{\mathbb{G}_m\cdot s}$ contains $0$. Since $h_P$ is equivariant under the $\mathbb{G}_m$-action, for each point $t\in \mathbb{G}_m\cdot s$, $h_P^{-1}(t)\cong h_P^{-1}(s)$. Thus $$\dim(h_P^{-1}(s))=\dim(\text{generic fiber of }h_P|_{\bar{\mathbb{G}_m\cdot s}})\leq \dim h_P^{-1}(0).$$ By (\ref{dimestimate0.0}), we have $\dim(h_P^{-1}(s))=\dim(\M_P)$ for any $s\in\sH_P$. Since $\Higgs^{W}_{P}$ and $\sH$ are both smooth, $h_P$ is flat.

Because all fibers are of dimension $\frac{1}{2}\Higgs_P=\dim\mathcal{H}_{P}$, $h_{P}$ is dominant. Since $h_P$ is projective by Proposition \ref{hproper}, it is surjective. \qed
\eproof

\subsubsection{Weakly Parabolic global nilpotent cone}

Let us compute the dimension of the weakly parabolic global nilpotent cone to show the flatness of $h^{W}_{P}$. However for weakly parabolic Higgs bundles $(\sE,\theta)$ in $\sN il_{P}^{W}$, the filtration $\{\Ker(\theta^{i})\}$ and the parabolic filtration are not compatible. As a result, it is not obvious to construct a complex dominating deformation within $\sN il^{W}_{P}$ as before.

We can still calculate $\dim\sN il^{W}_{P}$ by showing that there is a dominate map from a finite union of $\sN il^{W}_{B_i,\beta_i}$ to $\sN il^W_{P,\alpha}$. Here $B_i$ is a Borel quasi-parabolic structure refining $P$.

More precisely, for a Borel parabolic structure $(B,\beta)$, the weakly parabolic nilpotent cone coincides with the strongly parabolic nilpotent cone. Thus by Theorem \ref{dimestimate1}, $\sN il^W_{B,\beta}$ has the expected dimension $r^2(g-1)+1+\frac{r(r-1)\deg(D)}{2}$. 

For any generic point $\eta$ of $\sN il^W_{P,\alpha}$, by restricting the universal family on $\{D\}\times \eta$, it is not difficult to see there exists a Borel refinement $B_\eta$ of $P$, such that for general $(\sE,\theta)$ in the $\eta$-irreducible component of $\sN il^W_{P,\alpha}$, $\theta$ preserves the filtration given by $B_\eta$. 

One can choose a parabolic weight $\beta_\eta$ for each $B_\eta$, such that the stability is preserved after forgetting the parabolic structure from $(B_\eta,\beta_\eta)$ to $(P,\alpha)$. In other words, the forgetful map is well defined and restricts to $f_{\eta}:\sN il_{B_\eta,\beta_\eta}^W \to \sN il^W_{P,\alpha}$ which dominates the generic point $\eta$. Thus $\sqcup_{\eta}\sN il_{B_\eta,\beta_\eta}^W$ dominates $\sN il^W_{P,\alpha}$, and we conclude:

\btheorem The weakly parabolic nilpotent cone has dimension $$r^2(g-1)+1+\frac{r(r-1)\deg(D)}{2}.$$ Indeed, if $\Higgs_{P,\alpha}^W$ is smooth, the weakly parabolic Hitchin map $h_{P,\alpha}^W:\Higgs_{P,\alpha}^W \to\sH$ is flat. 
\etheorem

\subsection{Existence of very stable parabolic bundles}

\bdefinition We define that a system of parabolic Hodge bundles is a strongly parabolic Higgs bundle $(\sE,\theta)$ with a decomposition
$\sE\cong \oplus\sE^i$,
such that $\theta$ is decomposed as a direct sum of $\theta_i:\sE^i\to \sE^{i+1}\otimes \omega_X(D)$. Here $\{\sE^{i}\}$ are subbundles with induced parabolic structures.
\edefinition

If a strongly parabolic Higgs bundle $(\sE,\theta)$ is a fixed point of the $\mathbb{G}_m$-action, then it has the structure of a system of parabolic Hodge bundles. We have the following lemma similar to \cite[Lemma 4.1]{Sim92}, \cite[Theorem 8]{Sim90} and \cite[Theorem 5.2]{Yo95}:

\blemma\label{Gm fixed point} If the strongly parabolic Higgs bundle $(\sE,\theta)$ satisfies $(\sE,\theta)\cong (\sE,t\cdot\theta)$ for some $t\in \mathbb{G}_m(k)$ which is not a root of unity, then $\sE$ admits a structure as a system of parabolic Hodge bundles. In particular, if $\theta\neq 0$, then the decomposition $\sE\cong \oplus\sE^i $ given by the system of parabolic Hodge bundles is non-trivial.  
\elemma 

\begin{remark} One can conclude that given a strongly parabolic Higgs bundle $(\sE,\theta)$, if $\sE$ is stable and $\theta\neq 0$, it can not be fixed by the $\mathbb{G}_m$-action. 
	
\end{remark}

A section $s$ of $\sS \sP ar \sE nd(\sE)\otimes\omega_X(D)$ is nilpotent if $(\sE,s)\in \sN il_P$.
\bdefinition 
A stable parabolic bundle $\sE$ is said to be very stable if there is no non-zero nilpotent section in $H^0(X,\sS \sP ar \sE nd(\sE)\otimes\omega_X(D))$. 
\edefinition

\btheorem\label{very stable} The set of very stable parabolic bundles contains a non-empty Zariski open set in the moduli of stable parabolic bundles $\M_P$.
\etheorem

\bproof We denote by $N^0$ the open dense subset of $\Higgs_P$ consisting of $(\sE,\theta)$ such that $\sE$ is a stable parabolic vector bundle. Then $\pi: N^0\to \M_P$ by forgetting the Higgs fields is a well defined projection. 

$N^0$ is $\mathbb{G}_m$-equivariant in $\Higgs_P$, and $\pi$ is also $\mathbb{G}_m$-equivariant. Denote by $Z_1$ the set of $(\sE,\theta)$ with $\sE$ stable, $\theta$ nilpotent and nonzero. One observes that $Z_1\subset \sN il_P$, $Z_1$ is $\mathbb{G}_m$-equivariant and all the stable parabolic bundles which are not very stable are contained in $\pi(Z_1)$. 

Because $\sE$ is stable (cannot be decomposed) and $\theta$ is non-zero, then by Lemma \ref{Gm fixed point}, $\mathbb{G}_m$ acts freely on $Z_1$. Thus $Z_1/\mathbb{G}_m\twoheadrightarrow\pi(Z_1)$. One has $\dim(Z_1)\leq \dim(\sN il_P)=\dim(\M_P)$, so $$\dim(\pi(Z_1))\leq \dim(Z_1/\mathbb{G}_m)=\dim(\M_P)-1<\dim(\M_P).$$ Thus the set of very stable parabolic bundles contains a non-empty Zariski open set in $\M_P$.\qed
\eproof

\bcorollary 
For a generic choice of $a\in\mathcal{H}_P$, the natural forgetful map $h_{P}^{-1}(a)\dashrightarrow \M_P$ is a dominant rational map.
\ecorollary

\bproof 

By Theorem \ref{image parabolic}, we know that the image of $\Higgs_P$ is contained in $\mathcal{H}_P$.

Consider the following rational map:
\[\rho:\Higgs_P\dashrightarrow \sH_P\times \M_P\quad u\mapsto (h_P(u),\pi(u)).\]

By the existence of very stable parabolic vector bundles, i.e., there exists $(0,\sE)\in \mathcal{H}_{P}\times \M_P$ whose pre-image is $(0,\sE)\in \Higgs_P$. By Corollary \ref{dim formula}, we know that $\dim \Higgs_P=\dim\mathcal{H}_P+\dim \M_P$, which means that $\rho$
is generically finite. Thus $h_{P}^{-1}(a)\dashrightarrow \M_P$ is dominant, for generic $a\in\mathcal{H}_P$. \qed
\eproof

Using the similar method in \cite{PV85}, for example, as in \cite[Corollary 4.8]{WW21}, we can also show that the rational forgetful map $\text{Fg}:h^{-1}(a)\dashrightarrow \M_{P}$ is defined on an open sub-variety $U\subset h^{-1}(a)$ and $h^{-1}(a)\backslash U$ is of co-dimension $\geq 2$. It is well-known that there is a parabolic theta line bundle $\sL_{P}$ (which is not canonically defined) over $\M_{P}$. Then $\text{Fg}^{*}\sL_{P}$ can be extended to a line bundle over $h^{-1}(a)$, we still denote it by $\text{Fg}^{*}\sL_{P}$. To conclude:

\begin{corollary}
	For an $\ell\in\mathbb{Z}$, there is an embedding:
	\[
	H^{0}(\M_{P},\sL_{P}^{\otimes\ell})\hookrightarrow H^{0}(h^{-1}_P(a),\text{Fg}^{*}\sL_{P}^{\otimes\ell}).
	\]
\end{corollary}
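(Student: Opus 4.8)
The plan is to realize the map in the statement as the composite of pullback along the dominant forgetful map followed by a Hartogs-type extension across a high-codimension locus, so that no section is lost in either step.

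First I would set up the pullback. As recalled just before the statement, $F$ is a genuine morphism on an open subset $U\subseteq h_P^{-1}(a)$ whose complement has codimension $\geq 2$, and $(F|_U)^{*}\sL_P$ extends to the line bundle on $h_P^{-1}(a)$ that we continue to write $F^{*}\sL_P$. Pulling back along $F|_U$ produces a $k$-linear map
\[
(F|_U)^{*}\colon H^{0}(\M_P,\sL_P^{\otimes\ell})\longrightarrow H^{0}\bigl(U,(F^{*}\sL_P)^{\otimes\ell}\bigr).
\]
This is injective: by the preceding corollary $F$ is dominant, so $F(U)$ is dense in the integral variety $\M_P$; a nonzero $s\in H^0(\M_P,\sL_P^{\otimes\ell})$ is therefore nonzero on a dense open subset of $\M_P$ whose (nonempty) preimage meets $U$, forcing $(F|_U)^{*}s\neq0$. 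The same argument is valid for every $\ell\in\Z$.

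Second I would extend sections from $U$ to all of $h_P^{-1}(a)$. By the parabolic BNR correspondence (Theorem \ref{parabolic BNR}) the generic fiber satisfies $h_P^{-1}(a)\cong\Pic^{\delta}(\tilde{X}_a)$, which is smooth and in particular normal, while $h_P^{-1}(a)\setminus U$ has codimension $\geq 2$. Writing $j\colon U\hookrightarrow h_P^{-1}(a)$ for the inclusion, the line bundle $(F^{*}\sL_P)^{\otimes\ell}$ is reflexive of rank one, so $j_{*}j^{*}(F^{*}\sL_P)^{\otimes\ell}=(F^{*}\sL_P)^{\otimes\ell}$ and the restriction
\[
H^{0}\bigl(h_P^{-1}(a),(F^{*}\sL_P)^{\otimes\ell}\bigr)\xrightarrow{\ \sim\ }H^{0}\bigl(U,(F^{*}\sL_P)^{\otimes\ell}\bigr)
\]
is an isomorphism (algebraic Hartogs). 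Composing $(F|_U)^{*}$ with the inverse of this isomorphism gives the desired embedding.

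The only substantive input is the codimension $\geq 2$ estimate for the indeterminacy locus of $F$, which is the claim flagged above as following the method of \cite{Br85}; it is where the content lies, since it reduces to controlling the Higgs bundles in $h_P^{-1}(a)$ whose underlying parabolic bundle fails to be stable, using openness of the very stable locus in $\M_P$ established above together with the dimension bookkeeping of Lemma \ref{dim formula}. Granting that estimate, normality of $\Pic^{\delta}(\tilde{X}_a)$ makes the extension step automatic and the remainder of the argument is formal.
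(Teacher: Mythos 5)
Your proof is correct and follows essentially the same route the paper intends: pull back sections along the forgetful map on the open locus $U$ where $F$ is defined (with injectivity coming from dominance of $F$ and integrality of $\M_P$), then extend across the codimension $\geq 2$ complement by algebraic Hartogs, using normality of the generic fiber $h_P^{-1}(a)\cong\Pic^{\delta}(\tilde{X}_{a})$. The only substantive input, the codimension estimate for the indeterminacy locus, is precisely the point the paper also defers to the method of the cited reference, so nothing further is needed.
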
 

This is a generalization of that in \cite{BNR} to the parabolic case. It is interesting to see that the left-hand side vector space is also known as generalized parabolic theta functions of level $\ell$ (also referred to as conformal blocks in \cite{LS97}).

\section{Appendix}

In this appendix, we discuss singularities of generic spectral curves, along with ramifications. Since we may work over positive characteristics, a little bit more work is needed to use the Jacobian criterion. We assume $D=x$ and if $\text{char}(k)=2$, rank $r\geq 3$.  

\begin{lemma}\label{generic integrality}
	For a generic choice of $a\in \mathcal{H}_{P}$, the corresponding spectral curve $X_{a}$ is integral, totally ramified over $x$, and smooth elsewhere.
\end{lemma}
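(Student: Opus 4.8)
The plan is to treat the three assertions separately, picking up reducedness along the way. Total ramification over $x$ is essentially automatic: by Theorem \ref{image parabolic} every $a\in\sH_P$ satisfies $v(b_i)\ge\gamma_i$, and since $\gamma_i\ge 1$ for $i\ge 1$ we get $f|_{t=0}=\lambda^{r}$, so the scheme-theoretic fibre of $\pi_a\colon X_a\to X$ over $x$ is $\Spec k[\lambda]/(\lambda^{r})$, supported at the single point $\lambda=0$. Thus $\pi_a$ is totally ramified over $x$ for every $a\in\sH_P$, and it remains to establish smoothness away from $x$ and integrality for generic $a$.

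For smoothness away from $x$ I would run a Jacobian/incidence argument inside the total space of $L:=\omega_X(D)$. Writing $f=\lambda^{r}+a_1\lambda^{r-1}+\cdots+a_r$ with a local coordinate $s$ at a point $q\ne x$ and a local trivialisation of $L$, a point $(q,\lambda_0)\in X_a$ is singular exactly when $f=\partial_\lambda f=\partial_s f=0$ there. Forming the incidence variety $\mathcal I\subset\sH_P\times(\text{total space of }L\ \text{over}\ X\smallsetminus\{x\})$ cut out by these three equations and projecting to the surface factor, one sees that for fixed $(q,\lambda_0)$ the equation $f=0$ constrains $a_r$, the equation $\partial_\lambda f=0$ constrains $a_{r-1}$, and $\partial_s f=0$ constrains the $1$-jet of $a$; provided the linear system $\sH_P$ separates $1$-jets on $X\smallsetminus\{x\}$ these are three independent linear conditions, so $\dim\mathcal I\le\dim\sH_P-1$ and its image in $\sH_P$ is a proper closed subset. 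The delicate point — and the reason for the hypothesis $r\ge 3$ when $\operatorname{char}k=2$ — is exactly that in positive characteristic $\partial_\lambda f$ loses its leading term $r\lambda^{r-1}$, so one must check independence through the coefficient of $a_{r-1}$ (which survives in every characteristic) together with the required jet separation, i.e.\ that the bundles $\omega_X^{\otimes i}((i-\gamma_i)x)$ are ample enough on $X\smallsetminus\{x\}$. This is where I expect the main work to lie.

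For integrality the key is irreducibility, reducedness being automatic: over the dense open locus where $v(b_i)=\gamma_i$ and Assumption \ref{generic on boud} holds, Proposition \ref{decomposition of char} exhibits the completion at $x$ as a subring of the product of DVRs $\prod_i k[[t]][\lambda]/(f_i)$, hence reduced there, while smoothness handles $X\smallsetminus\{x\}$. To bound the reducible locus, suppose $f=g\cdot h$ over $k(X)$ on this non-degenerate locus. By the unique factorisation $f=\prod_i f_i$ into the distinct Eisenstein factors, $g$ is a sub-product $\prod_{i\in S}f_i$; as in Theorem \ref{image parabolic} its Newton polygon forces $g$ to lie in the parabolic Hitchin base $\sH_{P'}$ of a rank-$\left(\sum_{i\in S}\mu_i\right)$ problem, and likewise $h\in\sH_{P''}$. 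Hence the reducible members are contained in the finitely many images of the multiplication maps $\sH_{P'}\times\sH_{P''}\to\sH_P$.

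It then suffices to verify $\dim\sH_{P'}+\dim\sH_{P''}<\dim\sH_P$ for every splitting. By Lemma \ref{dim formula} each parabolic Hitchin base has the dimension of its moduli of parabolic bundles, so with $\dim\sH_P=(g-1)r^2+1+\tfrac12\bigl(r^2-\sum_j m^j(x)^2\bigr)$ and the identity (\ref{flagmui}) the comparison reduces to weighing $(g-1)\cdot 2d(r-d)$ (where $d=\sum_{i\in S}\mu_i$) against the parabolic correction $\Delta:=\sum_j m^j(x)^2-\sum_j (m'^{j})^2-\sum_j (m''^{j})^2$; writing $\Delta=2\sum_{i\in S,\,j\in S^c}\min(\mu_i,\mu_j)$ and using $\min(\mu_i,\mu_j)\le\mu_i\mu_j$ gives $\Delta\le 2d(r-d)$, whence the difference $\dim\sH_{P'}+\dim\sH_{P''}-\dim\sH_P\le-(g-1)2d(r-d)+1<0$ because $g\ge 2$ and $d(r-d)\ge 1$. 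Thus the reducible locus is nowhere dense, so some $a_0$ yields an integral spectral curve that is smooth away from $x$; since geometric integrality is open in the proper flat universal family and smoothness away from $x$ is open as well, the locus of $a$ enjoying all three properties is open and nonempty, hence dense.
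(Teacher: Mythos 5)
Your argument is correct in outline but follows a genuinely different route from the appendix. The paper exploits only that integrality and smoothness away from $x$ are open conditions, and then \emph{exhibits} a single good member of $\sH_P$: it takes $\text{char}_\theta=\lambda^r+a_r$ (irreducible once $a_r$ is not an $r$-th power, smooth off $x$ by weak Bertini and the Jacobian criterion when $\operatorname{char}k\nmid r$), and in the troublesome case $\operatorname{char}k\mid r$ switches to $\lambda^r+a_{r-1}\lambda+a_r$, choosing $a_{r-1}$ with simple zeros off $x$ and $a_r=a_{r-1}\otimes s$ so that the three equations of the Jacobian criterion have no common solution --- this is exactly where the hypothesis $r\ge 3$ in characteristic $2$ enters, rather than through jet separation. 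You instead prove genericity directly by dimension counts: an incidence-variety bound for the singular locus, and, for irreducibility, the observation that a global factorization $f=gh$ forces (via the local Eisenstein factorization of Proposition \ref{decomposition of char} and the Newton polygon) $g\in\sH_{P'}$, $h\in\sH_{P''}$ for sub-partitions, together with the estimate $\dim\sH_{P'}+\dim\sH_{P''}<\dim\sH_P$ using $\sum_j(m^j)^2=\sum_{i,i'}\min(\mu_i,\mu_{i'})$. Your route is longer but buys more: an explicit, quantitative description of the reducible locus as a union of images of the multiplication maps $\sH_{P'}\times\sH_{P''}\to\sH$, and it avoids having to craft characteristic-dependent examples. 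The paper's route is shorter and entirely elementary. Your observation that total ramification holds for \emph{every} $a\in\sH_P$ (since $\gamma_i\ge 1$ forces $f|_{t=0}=\lambda^r$) is correct and is implicitly how the paper treats that clause as well.

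Two loose ends you should close. First, the jet-separation claim you defer: the relevant bundles $\omega_X^{\otimes i}((i-\gamma_i)x)$ have degree $i(2g-2)+i-\gamma_i\ge r(2g-2)\ge 2g$ for $g\ge 2$, so separation of $1$-jets can fail only at finitely many $q$ (the borderline case $g=2$, $r=2$, Borel), and at such $q$ the two conditions through $a_r(q)$ and $a_{r-1}(q)$ still give codimension $2$ over a one-dimensional locus, which suffices; this should be said rather than expected. Second, in the irreducibility step you should note that $v(g_j)\ge\lceil\text{Newton polygon height}\rceil=\gamma'_j$ (integrality of valuations is needed to pass from the polygon to the level function), and that the dimension comparison survives the $h^1(\omega_X)=1$ correction in Lemma \ref{dim formula}, since the net contribution of the three ``$+1$''s is $+1$ and the slack $2(g-1)d(r-d)-1$ is still positive.
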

\bproof Since being integral is an open condition, similar as in \cite[Remark 3.1]{BNR}, we only need to show there exists $ a\in\mathcal{H}_P$, such that $X_{a}$ is integral.

Take $h_P((\sE,\theta))=\lambda^{r}+a_{r}=0$ with $a_{r}\in H^{0}(X, \omega^{\otimes r}((r-\gamma_{r})\cdot x))$.
The spectral curve $X_a$ is integral if $a_{r}$ is not an $m$-th power of some element in $\oplus_{i=1}^{r}H^{0}(X,\omega_X^{\otimes i}((i-\gamma_{i})x))$, for $m>1$. This is true for generic $a_{r}$. 

Since smoothness outside $x$ is an open condition, it is sufficient to find such a spectral curve.

When $\text{char}(k)\nmid r$, we take $h_P((\sE,\theta))=\lambda^{r}+a_{r}=0$. Due to the weak Bertini theorem, we can choose $a_{r}$ with only simple roots outside $x$. Applying the Jacobian criterion, $X_{a}$ is what we want.

When $\text{char}(k)\mid r$, we take the following equation:
$$h_P((\sE,\theta))=\lambda^{r}+a_{r-1}\lambda+a_{r}=0.$$

Then consider the following equations:
\[\left\{\begin{array}{rlcl}
	\lambda^{r}+a_{r-1}\lambda+&a_{r}&=&0\\
	&a_{r-1}&=&0\\
	a'_{r-1}\lambda+&a'_{r}&=&0
\end{array}\right. .\]
If $\text{char}(k)=2$, we assume $r\ge 3$. Then in this case, i.e., $\text{char}(k)\mid r$, $r$ is always greater than $2$. By the weak Bertini theorem, we can choose $a_{r-1}$ with only simple roots outside of $x$. Take $s\in H^0(X,\omega((1+\gamma_r-\gamma_{r-1})x))$ with zeros outside of $zero(a_{r-1})$, we can find $a_{r}=a_{r-1}\otimes s$ such that $zero(a_{r-1})\supset zero(a_{r})$, and $zero(a_{r-1})$ are simple zeros of $a_r$, then $X_{a}$ is smooth outside of $x$. \footnote{At $x$, $a_{r}$ will always have multiple zeros except for Borel type.} \qed
\eproof

\begin{lemma}\label{smooth, unram}
	For a generic $a\in \sH$, the corresponding spectral curve $X_{a}$ is smooth and $\pi_a:X_{a}\to X$ is unramified over $x$.
\end{lemma}
\bproof  
Smoothness is easy to show under the genericity condition. We will prove the second statement. The ramification divisor of $\pi_a$ is defined by the resultant. It is a divisor in the linear system of the line bundle $R:=\omega_X(x)^{\otimes r(r-1)}$. Considering the following morphism given by the resultant
$$\text{Res}: \sH\rightarrow H^{0}(X,R),\quad a\mapsto \text{Res}(a),$$ we have the codimension $1$ sub-space $$W:=H^{0}(X,R(-x))\subset H^{0}(X,R),$$ such that $\text{Res}(a)\in W$ if and only if $\pi_a$ is ramified over $x$. 

$\Res$ is a polynomial map so the image is a sub-variety. To prove our statement, we only need to find a particular $a$ so that $\pi_a$ is unramified over $x$. 

Consider the characteristic polynomial of the form
$\lambda^r+a_r$. In a neighbourhood of $x$, we can write it as $\lambda^r+b_r\cdot(\frac{dt}{t})^{\otimes r}$. Here $\frac{dt}{t}$ is the trivialization of $\omega(x)$ near $x$. By the Jacobian criterion, $\pi_a$ is unramified over $x$ if $b_r\in \sO_{X,x}$ is indecomposable. Taking $b_r=t$ and extending $t\cdot(\frac{dt}{t})^{\otimes r}$ to a global section $s$, we find $a=\lambda^r+s$ such that $\pi_a$ is unramified at $x$. \qed
\eproof
\vspace*{20pt}

\noindent\textbf{acknowledgements}. The authors thank Eduard Looijenga. Discussions with Eduard motivated our proof of the first main theorem and significantly affected the organization of this paper. The authors thank the referee for many helpful comments and suggestions to improve the paper. The authors also thank Bingyi Chen, Yifei Chen, H\'el\'ene Esnault, Yi Gu, Peigen Li, Yichen Qin, Junchao Shentu, Xiaotao Sun and Xiaokui Yang for helpful discussions.
	
	The work of Xiaoyu Su and Xueqing Wen was performed at Yau Mathematical Sciences Center and supported by Tsinghua Postdoctoral daily Foundation. The work of Bin Wang was performed at the Steklov International Mathematical Center, Moscow, Russia and supported by the Ministry of Science and Higher Education of the Russian Federation (agreement no.  075-15-2019-1614 ).

\bibliographystyle{alpha}
\bibliography{ref}

\end{document}